\documentclass[11pt]{article}

\usepackage{hyperref}
\usepackage{enumerate, amsthm}
\usepackage{mathrsfs}
\usepackage{rotating}
\usepackage{cancel}
\usepackage{comment}
\usepackage{xcolor}
\usepackage[margin= 0.8 in]{geometry}
\usepackage[margin={1.5cm, 1.5cm},font=small, labelsep=endash]{caption}
\usepackage{mathtools}
\usepackage{tikz}
\usetikzlibrary{patterns}
\usepackage{tikz-dimline}

      \usepackage{amssymb}
      \usepackage{amsmath}
      
   %
      \numberwithin{equation}{section}
      \theoremstyle{plain}
      \newtheorem{theorem}{Theorem}[section]
      \newtheorem{lemma}[theorem]{Lemma}

      \theoremstyle{definition}

      \theoremstyle{remark}
      \newtheorem{remark}[theorem]{Remark}



      \newcommand{\E}{\mathbb E}
      \renewcommand{\P}{\mathbb P}

      \newcommand{\cov}{\mbox{Cov}}
      \newcommand{\Z}{\mathbb{Z}}
      \newcommand{\N}{\mathbb{N}}

      \newcommand{\e}{\mathrm{e}}
      \newcommand{\pa}{\mathcal P}


      \makeatletter
      \def\@setcopyright{}
      \def\serieslogo@{}
      \makeatother
\title{First passage percolation on the exponential of two-dimensional branching random walk}
\author{ Jian Ding\thanks{Partially supported by NSF grant DMS-1455049 and Alfred Sloan fellowship.} \\ University of Pennsylvania \and Subhajit Goswami\footnotemark[1]  \\
Institut des Hautes \'{E}tudes Scientifiques
}

\begin{document}

\maketitle

\begin{abstract}
We consider the branching random walk $\{\mathcal R^N_z: z\in V_N\}$ with Gaussian increments indexed over a two-dimensional box $V_N$ of side length $N$, and we study the first passage percolation where each vertex is assigned weight $e^{\gamma \mathcal R^N_z}$ for $\gamma>0$. We show that for $\gamma>0$ sufficiently small but fixed, the expected FPP distance between the left and right boundaries is at most $O(N^{1 -  \gamma^2/10})$.\newline

\smallskip
\noindent{\bf Key words and phrases.} First passage percolation (FPP), Branching Random Walk 
(BRW), Gaussian free field (GFF), Liouville quantum gravity (LQG).
\end{abstract}
\section{Introduction}
For $n\in \mathbb N$ and $N = 2^n$, let $V_N \subseteq \mathbb Z^2$ be a box of side
length $N$ whose lower left corner is located at the origin. Let $\mathbb{BD}_j$ denote the 
collection of boxes of the form $([0, 2^{j}-1] \cap \mathbb{Z})^2 + 
(i_1 2^j, i_2 2^j)$ where $i_1, i_2$ and $j$ are nonnegative integers. For $z \in \mathbb{Z}^2$, 
let $\mathbb{BD}_j(z)$ be the (unique) box in $\mathbb{BD}_j$ that contains $v$. Let 
$\{a_{B}\}_{k\geq 0, B\in \mathbb{BD}_k}$ be a collection of i.i.d.\ standard Gaussian variables. 
We define a \emph{branching random walk} $\{\mathcal R_z^N: z \in V_N\}$ with Gaussian increments by
\begin{equation}\label{eq-def-BRW}
     \mathcal{R}_z^N = \sum\limits_{k = 0}^{n-1} a_{\mathbb{BD}_k(z)}.
\end{equation}
For any rectangle $V\subseteq \mathbb Z^2$, we denote by $\Pi_{LR}^V$ the collection of all paths  $\pi$ in $V$ (considered as a subgraph of $\mathbb{Z}^2$) that connect 
the left and right boundaries of $V$. We refer to such a path as a (left-right) \emph{crossing} of $V$. For a 
fixed $\gamma > 0$ we define 
$$D_{\gamma, LR}(V_N) = \min_{\pi \in \Pi_{LR}^{V_N}}\sum\limits_{z \in \pi} \mathrm{e}^{\gamma \mathcal{R}_z^N},$$
to be the first passage percolation distance between the two boundaries of $V_N$ where we assign each vertex $z$ a weight of $e^{\gamma \mathcal R_z^N}$.
\begin{theorem}\label{thm-main}
For a fixed $\gamma_0$ that is small, and all $\gamma < \gamma_0$, we have that
$$\E D_{\gamma, LR}(V_N) = O( N^{1 - \gamma^2/10})\,.$$
\end{theorem}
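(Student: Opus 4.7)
The strategy is to exhibit a random left-right path $\pi^\ast$ of $V_N$ and to bound $\E \sum_{z \in \pi^\ast} e^{\gamma \mathcal R_z^N}$ using the multi-scale independence of the $a_B$'s together with a site-percolation argument run at each scale.

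For each scale $k \in \{0,\dots,n-1\}$ pick a threshold $t_k$ and call $B \in \mathbb{BD}_k$ \emph{$k$-good} when $a_B \le t_k$. The goodness indicators are i.i.d.\ Bernoulli$(\Phi(t_k))$ at each fixed scale and independent across scales. Choosing $t_k$ so that $\Phi(t_k)$ safely exceeds the critical site-percolation probability $p_c$ on $\mathbb Z^2$, standard supercritical 2D percolation input (Russo--Seymour--Welsh type estimates together with linear chemical-distance bounds) produces, at each scale $k$, a $k$-good left-right crossing of $V_N$ in the scale-$k$ box graph of length $O(N/2^k)$ with probability at least $1 - e^{-cN/2^k}$.

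We then build $\pi^\ast$ top-down: start from a good left-right crossing at scale $n-1$; inside each visited $(n{-}1)$-good box, use the independent scale-$(n{-}2)$ Gaussians to find a good sub-crossing between the entry and exit sides of that box; splice; and iterate down to scale $0$. With canonical, deterministic selection rules the resulting $\pi^\ast$ is measurable with respect to the goodness $\sigma$-field $\mathcal{G} := \sigma(\mathbf{1}\{a_B \le t_k\} : k, B)$, and (provided each recursion costs only a bounded multiplicative length factor) $\E|\pi^\ast| = O(N)$. Conditional on $\mathcal{G}$, the variable $a_B$ at every good box is a Gaussian restricted to $(-\infty, t_k]$, and different scales are still mutually independent; since every $z \in \pi^\ast$ lies in a $k$-good box at every scale,
$$
\E\bigl[e^{\gamma \mathcal R_z^N}\,\big|\,\mathcal G,\, z\in\pi^\ast\bigr]
\;=\; \prod_{k=0}^{n-1}\E[e^{\gamma a}\mid a\le t_k]
\;=\; \prod_{k=0}^{n-1}\frac{e^{\gamma^2/2}\,\Phi(t_k-\gamma)}{\Phi(t_k)}.
$$
Summing over $z$ inside and then taking expectation over $\mathcal G$ yields
$\E D_{\gamma,LR}(V_N)\le \E|\pi^\ast|\cdot\prod_k e^{\gamma^2/2}\Phi(t_k-\gamma)/\Phi(t_k)$;
an optimized (possibly scale-dependent) choice of $t_k$ makes this product at most $N^{-\gamma^2/10}$, giving the stated bound.

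The main technical obstacle is the recursive splicing. One needs a quantitative supercritical percolation crossing estimate with \emph{prescribed} entry and exit sides that iterates cleanly through all $\log_2 N$ scales without the implicit constants accumulating, together with a canonical measurable selection rule that keeps $\pi^\ast$ a deterministic function of $\mathcal G$ alone (otherwise the conditional-expectation factorization above breaks). Balancing the length penalty---which pushes $\Phi(t_k)$ safely above $p_c$---against the per-scale weight saving $\sim \gamma\phi(t_k)/\Phi(t_k)$ is what ultimately pins down the specific exponent $\gamma^2/10$ in the statement.
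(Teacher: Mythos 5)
Your conditional-expectation factorization is correct (given the measurability claims), and the formula $\E[e^{\gamma a}\mid a\le t]=e^{\gamma^2/2}\Phi(t-\gamma)/\Phi(t)$ is right. The gap is a quantitative tension that makes the scheme unable to reach an exponent of the form $1-\Omega(\gamma^2)$ for small $\gamma$, no matter how you tune the thresholds $t_k$.

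To leading order in $\gamma$, the per-vertex, per-scale weight factor is $1+\tfrac{\gamma^2}{2}-\gamma\,\phi(t_k)/\Phi(t_k)+O(\gamma^2)$. For the product over all $n$ scales to contribute $N^{-\Omega(\gamma^2)}$ you must have $\phi(t_k)/\Phi(t_k)=\Omega(\gamma)$, which forces $t_k=O(\sqrt{\log(1/\gamma)})$, and hence $q:=1-\Phi(t_k)=\Omega\bigl(\gamma/\sqrt{\log(1/\gamma)}\bigr)$. On the other hand, the top-down splicing pays a length price at every scale. In the $2\times 2$ recursion you describe, descending one level replaces each scale-$k$ box on the path by a sub-crossing that, with probability $\Theta(q)$, must detour around a bad sub-box, and with probability $\Theta(q)$ does not exist at all with the prescribed entry and exit sub-boxes (both prescribed boxes good, and connectable through good boxes, already fails with probability of order $q$ — e.g.\ one of the two forced endpoints is bad, or the two good boxes are diagonal). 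So even conditionally on success, the length multiplier per level is at least $2(1+\Omega(q))$, yielding $\E|\pi^*|=N^{1+\Omega(q)}$. Since $q\gg\gamma^2$ as $\gamma\to 0$, the total exponent is $1+\Omega(q)-O(\gamma^2)>1$, which is worse than the trivial bound. Increasing $t_k$ to shrink $q$ kills the per-vertex saving exponentially fast (as $\phi(t_k)$), so no choice of $t_k$ balances the two sides. The cited RSW and chemical-distance inputs do not help here: they control crossings of \emph{large} boxes, whereas the recursion bottoms out at $2\times 2$ sub-grids with \emph{prescribed} endpoints, where the failure probability is a fixed positive multiple of $1-p$ and no large-deviation improvement is available. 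A patching scheme for these failures is an essential missing ingredient, and any such scheme re-introduces untruncated Gaussians along $\pi^*$, breaking the clean factorization you rely on.

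The paper avoids exactly this obstruction by replacing \emph{truncation} with \emph{choice}: at each scale one compares the top and bottom row (via the switched-sign reparametrization of the BRW) and takes the better one. The gain per scale comes from $\E\min(X,Y)=-1/\sqrt{\pi}$, so it is $\Omega(\gamma)$ uniformly in the threshold — there is no threshold to tune, hence no trade-off against a length blow-up. The length bookkeeping then reduces to counting the vertical ``switching'' connectors, whose cost is optimized against the gain using the regularized total variation of Brownian motion (Theorem~\ref{thm:brownian_switch}), and the rectangle aspect ratio $\Gamma\sim\alpha/\gamma^2$ together with the case analysis on $\{d_{\gamma,\ell,j}\}$ keeps the induction closed. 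This is the mechanism that actually produces the $\gamma^2/10$ exponent, and it is qualitatively different from the percolation-truncation scheme you propose.
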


\begin{remark}
Our proof in fact gives an upper bound on the expected weight for the geodesic crossing through a rectangle. As a result, one can show that the expected weight for the geodesic connecting two fixed vertices has exponent strictly less than 1, by constructing a sequence of ($\log N$ many) rectangles with geometrically growing size that connect these two vertices. 
\end{remark}

\subsection{A brief discussion on motivation and background}

In a broad context, our work is motivated by studying first passage percolation on random media with heavy correlation. The main contribution of our work, is to demonstrate an instance that in a hierarchical random field the exponent of the FPP distance can be strictly less than 1 (despite the fact that the a straight line has weight with exponent strictly larger than 1, and that the majority of the vertices are of values $O(\sqrt{\log N})$). This is, of course, rather different from the classical FPP where the edge/vertex weights are independent and identically distributed. We refer to \cite{ADH15, GK12} and references therein for a review of classical first passage percolation.

In the specific instance of branching random walk, our work is closely related to the first passage percolation problem when the vertex weight is given by exponentiating a two-dimensional discrete Gaussian free 
field (GFF). Inspired by the current work, a similar upper bound on the FPP distance in this case has been recently obtained in 
\cite{DG16}. The FPP on two-dimensional discrete GFF, as we expect, is of fundamental importance in understanding the random metric associated with the Liouville quantum gravity (LQG) \cite{P81, DS11, RV14}. We remark that the random metric of LQG is a major open problem, even ``just'' to make rigorous sense of it. In a recent series of works of Miller and Sheffield, much understanding has been obtained (more on the continuum set up) in the special case of $\gamma = \sqrt{8/3}$; 
see \cite{MS15, MS16} and references therein. Another recent work \cite{GHS16} has provided some bounds on the scaling exponent for a type of LQG metric, though we emphasize that their definition is of very different flavor from the FPP perspective considered in our paper (in particular, no mathematical connection can be drawn between these works at the moment). We refrain ourselves from an extensive discussion on background of LQG in this article. As a final remark to the connection of LQG metric, we note that there are other candidate discrete metrics (associated with LQG or GFF) that have been proposed, and it is possible that these metrics with suitable normalization would give a more desirable 
scaling limit than that of FPP. However, we feel that in the level of precision of the present article and that of \cite{DG16}, it is quite possible that the fundamental mathematical structures (and thus obstacles) are common for these related notions of metrics. 

BRW is perhaps the simplest construction that approximates a log-correlated Gaussian field, of which 
the GFF is a special instance. A number of other properties, especially those regarding to the extreme values of the field, have been proved to exhibit universal behavior among log-correlated Gaussian fields 
(c.f., \cite{Madaule, DRZ15}). Indeed, it is typical that properties were first proved for BRW, and then later for GFF; and it is also typical that the proof in the case of GFF were substantially enlightened by the understanding on BRW. 
However, caution is required when drawing heuristic conclusion on GFF based on BRW for FPP: as demonstrated in \cite{DZ15}, there exists a family of log-correlated Gaussian fields whose exponent for FPP can be made arbitrarily close to 1 (as $K$ grows to $\infty$) if one is allowed to perturb the covariance entry-wise by an additive amount that is at most $K$ (and thus the scaling exponent for FPP is non-universal among log-correlated fields). That being said, we remark that while the mathematical details in \cite{DG16} are substantially more involved and delicate, these two papers do share the same very basic multi-scale analysis proof framework. 

\subsection{Main ideas of our approach}

Our approach is inspired by the rescaling argument employed in the proof of connectivity for the fractal percolation process \cite{CCD88, DM90, Chayes95}, the 
idea of which went back to \cite{ACCFR}. For the purpose of induction, we will in fact work with crossings through 
a rectangle rather than a square. In what follows, we should consider $\gamma$ as a small positive number (but fixed as $N \to \infty$).

We wish to carry out an inductive construction for a light path crossing the rectangle: we will take advantage of the hierarchical structure of BRW and will construct the BRW along the way as we construct light paths in different scales. In order to properly describe our procedure, we need a number of definitions. Let $\Gamma = \Gamma(\gamma) \in [\alpha/\gamma^2, \alpha/\gamma^2 + 2]$ be an odd, positive integer for 
some $\alpha$ to be selected, and let $V_N^{\Gamma} = ([0, \Gamma N - 1] \times [0, N - 1]) 
\cap \Z^2$. The reason for choosing $\Gamma$ odd is mere technical convenience which will 
become apparent later. We will also work under the assumption that $\gamma\ll 1/\alpha \ll 1$ (small or large enough for our bounds or inequalities to hold although we keep this 
implicit in our discussions). We define a Gaussian process $\{\mathcal{R}_z^N: z \in V_{N}^{\Gamma}\}$ on $V_N^{\Gamma}$ as follows:
\begin{itemize}
\item $\{\mathcal{R}_z^N: z \in V_{N;j}^\Gamma\}$ is a BRW on $V_{N;j}^\Gamma = ([(j-1)N, jN - 1] \times [0, N-1]) \cap \Z^2$ where $j \in [\Gamma] \coloneqq \{1, \ldots, \Gamma\}$.
\item $\{\mathcal{R}_z^N: z \in V_{N;j}^\Gamma\}$'s are independent of each other for $j\in [\Gamma]$.
\end{itemize}
So $\{\mathcal{R}_z^N: z \in V_{N}^{\Gamma}\}$ is basically a concatenation of $\Gamma$ 
independent BRWs placed side by side. We can view $V_{N}^{\Gamma}$ as a $\Gamma N \times N$ 
rectangle divided into $\Gamma$ cells of dimension $N \times N$. Similarly, $V_{2N}^{\Gamma}$ can 
be viewed as a $2\Gamma N \times 2N$ rectangle divided into 4 sub-rectangles of dimension $\Gamma N 
\times N$ each of which is a copy of $V_{N}^{\Gamma}$ (see Figure~\ref{fig:geodesic_new} below for 
an illustration). 
\begin{figure}[!htb]
 \centering
\begin{tikzpicture}[semithick, scale = 0.9]
\draw [dashed] (0, -1.9) -- (0, 1.9);
\draw [dashed] (-5.9, 0) -- (5.9, 0);

\draw (0.1, 0.1) rectangle (1.9, 1.9);
\node [scale = 1] at (1, 1) {$a_{B_{n; 1, 2, 1}}$};
\draw (2.1, 0.1) rectangle (3.9, 1.9);
\node [scale = 1] at (3, 1) {$a_{B_{n; 1, 2, 2}}$};
\draw (4.1, 0.1) rectangle (5.9, 1.9);
\node [scale = 1] at (5, 1) {$a_{B_{n; 1, 2, 3}}$};

\draw [decorate,decoration={brace,amplitude=10pt, raise=4pt},yshift=0pt](0.1, 1.9) -- (5.9, 1.9); 
\node [scale = 1, above] at (3, 2.5) {$V_{N; 1, 2}^\Gamma$};

\draw (-0.1, 0.1) rectangle (-1.9, 1.9);
\node [scale = 1] at (-1, 1) {$a_{B_{n; 1, 1, 3}}$};
\draw (-2.1, 0.1) rectangle (-3.9, 1.9);
\node [scale = 1] at (-3, 1) {$a_{B_{n; 1, 1, 2}}$};
\draw (-4.1, 0.1) rectangle (-5.9, 1.9);
\node [scale = 1] at (-5, 1) {$a_{B_{n; 1, 1, 1}}$};

\draw [decorate,decoration={brace,amplitude=10pt, raise=4pt},yshift=0pt](-5.9, 1.9) -- (-0.1, 1.9); 
\node [scale = 1, above] at (-3, 2.5) {$V_{N; 1, 1}^\Gamma$};

\draw (0.1, -0.1) rectangle (1.9, -1.9);
\node [scale = 1] at (-1, -1) {$a_{B_{n; 2, 1, 3}}$};
\draw (2.1, -0.1) rectangle (3.9, -1.9);
\node [scale = 1] at (-3, -1) {$a_{B_{n; 2, 1, 2}}$};
\draw (4.1, -0.1) rectangle (5.9, -1.9);
\node [scale = 1] at (-5, -1) {$a_{B_{n; 2, 1, 1}}$};

\draw [decorate,decoration={brace,amplitude=10pt, mirror, raise=4pt},yshift=0pt](-5.9, -1.9) -- (-0.1, -1.9); 
\node [scale = 1, below] at (-3, -2.5) {$V_{N; 2, 1}^\Gamma$};

\draw (-0.1, -0.1) rectangle (-1.9, -1.9);
\node [scale = 1] at (1, -1) {$a_{B_{n; 2, 2, 1}}$};
\draw (-2.1, -0.1) rectangle (-3.9, -1.9);
\node [scale = 1] at (3, -1) {$a_{B_{n; 2, 2, 2}}$};
\draw (-4.1, -0.1) rectangle (-5.9, -1.9);
\node [scale = 1] at (5, -1) {$a_{B_{n; 2, 2, 3}}$};

\draw [decorate,decoration={brace,amplitude=10pt,mirror, raise=4pt},yshift=0pt] (0.1, -1.9) -- (5.9, -1.9); 
\node [scale = 1, below] at (3, -2.5) {$V_{N; 2, 2}^\Gamma$};
\end{tikzpicture}
 \caption{{\bf $(n+1)$-th stage of the recursive construction of BRW on $V_{2N}^\Gamma$}. Here $\Gamma = 3$.}
 \label{fig:geodesic_new}
\end{figure}
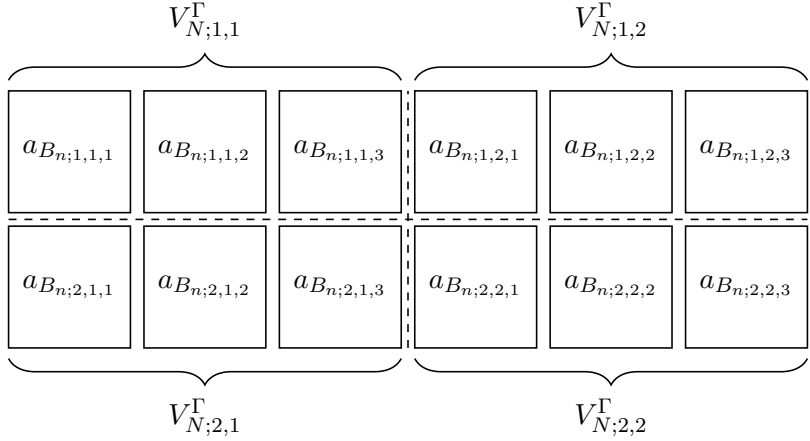

We wish to construct a light path in $V_{2N}^\Gamma$ 
based on constructions in $V_N^\Gamma$. As the path crosses through the rectangle of $V_{2N}^\Gamma$, we will make a choice of whether the path will stay in the upper or lower sub-rectangles based on the Gaussian values associated with the boxes of dimension $N\times N$, and thus we will switch back and forth between top and bottom 
layers. Our goal is to show that the expected weight of the crossing we constructed expands by a factor less than 
2 when the size of the rectangles doubles. Apart from doubling of dimension at each level, there are a number of reasons that the weight will expand: (1) The expected weight assigned to a vertex $z$ i.e. $\E \e^{\gamma 
R_z^N}$ grows by a factor bigger than $1$; (2) We will need to make a construction to connect the paths on the left half and the right half of the rectangle; (3) Every time we switch between top and bottom layer, we will have to add a top-down crossing in the switching location. What we would possibly gain is from the ``variation'' of these Gaussian variables associated with $N\times N$ boxes and thus choosing the favorable layer would reduce 
the weight of our path. 

The aforementioned scheme involves several subtleties. One has to be rather strategic in switchings as there is a cost for that. Eventually, this is reduced to a question of computing the following \emph{regularized total variation} for the Brownian motion, defined by
\begin{equation}\label{eq-regularized-total-variation}
\E \max_k \max_{0 = t_0 < t_1 < t_2 <\ldots < t_k < t_{k+1} = 1} \sum_{i=1}^{k+1} |B_{t_i} - B_{t_{i-1}}| - \lambda k\,.
\end{equation}
Here $B_{[0, 1]}$ is a standard Brownian motion, and $\lambda > 0$ is the term that measures the 
switching cost. We will use a recent result of \cite{Dunlap} on the asymptotic value of 
\eqref{eq-regularized-total-variation} as $\lambda \to 0$.\\

\noindent {\bf Notation convention.} In the rest of the paper we refer to the vertices of $\Z^2$ 
as \emph{points}. For any point $z \in \Z^2$, the horizontal and vertical coordinates of $z$ are 
denoted by $z_x$ and $z_y$ respectively. If $z_y = 0$, we represent $z$ simply as $z_x$, which 
should be clear from the context. A (finite) \emph{path} $\pi$ is a finite sequence $(v_0, v_1, \cdots, v_m)$ of points such that for each $i \geq 0$, $v_i$ and $v_{i + 1}$ are neighbors in $\Z^2$. In each of the pairs of non-negative integers $(n, N), (\ell, 
L), (\ell', L')$ and $(\ell'', L'')$, the second element is always assumed to be 2 raised to the 
power of first element. For functions $F(.)$ and $G(.)$, we write $F = O(G)$ if there exists an 
absolute constant $C > 0$ such that $F \leq C G$ everywhere in the domain. We write $F = 
\Omega(G)$ if $G = O(F)$.

\vspace*{0.2cm}

\noindent {\bf Acknowledgement.} We warmly thank Steve Lalley for numerous helpful discussions and constant encouragement throughout the project. We are grateful to an anonymous referee for a thorough and careful review of an earlier manuscript.

\section{Preliminaries}
\subsection{Coarsening of paths and $\mathcal L$-segments}
The $L$-\emph{coarsening} of a path $\pi = (v_0, 
v_1, \cdots, v_m)$ is defined as follows. Define a sequence of integers $m_0, m_1,\cdots$ recursively as $m_0 = 0$ and,
$$m_j = \inf\{m \geq i \geq m_{j-1} : v_i \notin \mathbb{BD}_\ell(v_{m_{j-1}})\}\mbox{ for }j \geq 1\,.$$
Here we adopt the standard convention that infimum of an 
empty set is $\infty$. Let $p_{L, \pi}$ be the last integer $j$ such that $m_j$ is finite. The $L$-coarsening of $\pi$, denoted as $\pi_{L, coarse}$, can now be defined as the sequence of points $\big(c(\mathbb{BD}_\ell(v_{m_0})), c(\mathbb{BD}_\ell(v_{m_1})), \cdots, 
c(\mathbb{BD}_\ell(v_{m_{p_{L, \pi}}}))\big)$ where 
$c(B)$ is the center of the box $B$. We can also treat $\pi_{L, coarse}$ as a path in $\mathbb{R}^2$ that connects successive points in this list by a line segment. Notice that the coarsening of a simple (i.e. 
self-avoiding) path is not necessarily simple. However if the $L$-coarsening of a (left-right) 
crossing $\pi^*$ of $V_{2L}^\Gamma$ is simple, then the $2L$-coarsening of $\pi^*$ is also simple. 
We will use this fact when we carry out the induction 
step. An \emph{$\mathcal L$-segment at level $\ell$} is a path $\pi$ whose $2^\ell$-coarsening is one of 
two shapes shown in Figure~\ref{fig:L_segment}.
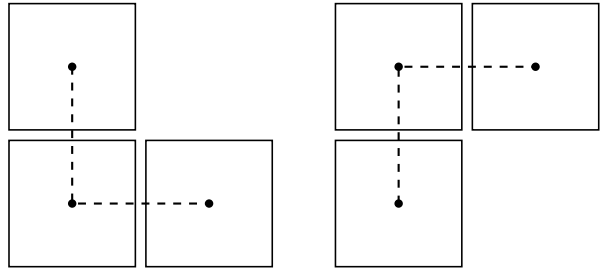
\begin{figure}[!htb]
	\centering
	\begin{tikzpicture}[semithick, scale = 1.4]
	\draw (-0.6, -0.6) rectangle (0.6, 0.6);
	\fill (0, 0) circle [radius = 0.04];
	\draw (0.7, -0.6) rectangle (1.9, 0.6);
	\fill (1.3, 0) circle [radius = 0.04];
	\draw (-0.6, 0.7) rectangle (0.6, 1.9);
	\fill (0, 1.3) circle [radius = 0.04];
	
	\draw [thick, dashed] (0, 1.3) -- (0, 0) -- (1.3, 0);
	
	\draw (2.5, 0.7) rectangle (3.7, 1.9);
	\fill (3.1, 1.3) circle [radius = 0.04];
	\draw (3.8, 0.7) rectangle (5, 1.9);
	\fill (4.4, 1.3) circle [radius = 0.04];
	\draw (2.5, -0.6) rectangle (3.7, 0.6);
	\fill (3.1, 0) circle [radius = 0.04];
	
	\draw [thick, dashed] (3.1, 0) -- (3.1, 1.3) -- (4.4, 1.3);
	\end{tikzpicture}
	\caption{{\bf Two possible $2^\ell$-coarsenings of an $\mathcal L$-segment at level $\ell$}.}
	\label{fig:L_segment}
\end{figure}
\subsection{Switched sign construction for branching random walk}
We first introduce a few more notations. As described in the introduction, we view $V_{2N}^{\Gamma}$ as a $2\Gamma N \times 2N$ rectangle divided into 4 sub-rectangles of dimension $\Gamma N \times N$ each of which is a copy of $V_{N}^{\Gamma}$ (see Figure~\ref{fig:geodesic_new} 
for an illustration). Denote these 4 subrectangles (or the copies of $V_N^{\Gamma}$) as $\{V_{N;i,k}^{\Gamma}\}_{i\in [2], k \in [2]}$ in the 
usual order. We can also shift the origin by $u = (\Gamma i 2^{n+1}, j 2^{n+1})$ for $i, j \in \N \cup \{0\}$ and denote the corresponding subrectangles (respectively rectangles) by $\{V_{N;i,k}^{\Gamma, u}\}_{i\in [2], k 
\in [2]}$ (respectively $V_{2N}^{\Gamma, u}$). Similarly we can define the fields $\{\mathcal{R}_{z; i, k}^{N, u}: z \in V_{N;i,k}^{\Gamma, u}\}$ or $\{\mathcal{R}_{z}^{N, 
u}: z \in V_{N}^{\Gamma, u}\}$. The $j$-th constituent cell in $V_{N;i,k}^{\Gamma, u}$ (which is box of side length $N$) will be denoted as $B_{n;i,k,j}^u$ where $j \in [\Gamma]$. We observe that the Gaussian field $\{\mathcal{R}_{z}^{2N, u}: z \in V_{2N}^{\Gamma, u}\}$ is obtained by adding $a_{B_{n;i,k,j}^u}$ to the field $\{\mathcal{R}_{z; i, k}^{N, u}: z \in V_{N;i,k}^{\Gamma, u}\}$ in the box $B_{n;i,k,j}^u$ for $i \in [2], k \in [2]$ and $j \in 
[\Gamma]$ (See Figure~\ref{fig:geodesic_new}). We will omit the additional superscript $u$ in all these notations when $u = 0$.

We will represent BRW by a construction in the fashion of switching-signs, which is tailored to our inductive 
construction for light crossings. To this end, we denote the collection of all rectangles of the form $([0, 2^\ell - 1] \times [0, 2^{\ell+1} - 1]) \cap \Z^2 + (i2^\ell, j2^\ell)$ by $\mathbb{B}\mathbb{D}_{\ell,2}$ where $i, j, 
\ell$ are nonnegative integers. Like $\mathbb{BD}(z)$, we use $\mathbb{BD}_{\ell, 2}(z)$ to denote the unique rectangle in $\mathbb{BD}_{\ell, 2}$ containing $z \in 
\Z^2$. Let $A_{\ell,\Gamma}$ be the collection of all 
points of the form $(\Gamma i 2^\ell, j 2^\ell)$. Denote by $R_{\ell; k, j}^u$ the rectangle formed by the boxes $B_{\ell;1, k, j}^u$ and $B_{\ell;2, k, j}^u$ when $u \in 
A_{\ell+1, \Gamma}$ (see Figure~\ref{fig:switch_BRW}). Note that $R_{\ell; k, j}^u\in 
\mathbb{B}\mathbb{D}_{\ell, 2}$. Also let $\{a_{B}\}_{B \in \mathbb{B}\mathbb{D}_{k,2}, k \geq 0}$ be a collection of i.i.d.\ standard normal random variables, that is independent of $\{a_{B}\}_{B\in \mathbb{BD}_k, 
k\geq 0}$. We will now construct a family of Gaussian fields $\{\chi_z^{L, u}: z \in V_L^{\Gamma, u}\}$ for $u \in A_{\ell, \Gamma}$ and $\ell =0, 1, 2, \ldots, n$ recursively as follows:
\begin{itemize}
\item $\chi_z^{1, u} = 0$ for all $z \in V_1^{\Gamma, u}$.
\item Let $u \in A_{\ell+1, \Gamma}$ and $\{\chi_{z; i, k}^{L, u}: z \in V_{L;i,k}^{\Gamma, u}\}$ be the field on $V_{L; i,k}^{\Gamma, u}$ constructed in level $\ell$. For $z \in B_{\ell; i, k, j}^u$, 
define 
\begin{equation}
\label{eq:chi_def}
\chi_z^{2L, u} = \chi_{z; i, k}^{L, u} + (-1)^{(k - 1)\Gamma + j}b_\ell a_{\mathbb{BD}_{\ell + 1}(z)} + (-1)^i c_\ell a_{\mathbb{BD}_{\ell, 2}(z)}\,,
\end{equation}
where $b_\ell = \sqrt{(1 - 4^{-\ell})/3}$ and $c_\ell = 
\sqrt{2(1 - 4^{-\ell})/3}$.
\end{itemize}
A few remarks about the above construction might be 
helpful. For $z \in B_{\ell; i, k, j}^u$, the box $\mathbb{BD}_{\ell + 1}(z)$ is simply $V_{2L; \lceil( (k-1)\Gamma + j) / 2 \rceil}^{\Gamma, u} = u + V_{2L; \lceil ((k-1)\Gamma + j) / 2 \rceil}^\Gamma$ whereas 
$\mathbb{BD}_{\ell, 2}(z)$ is $R_{\ell; k, j}^u$. Thus we add the same variable, namely
\begin{equation}
\label{eq:Z_def}
Z_{\ell; i, k, j} = (-1)^{(k - 1)\Gamma + j}b_\ell a_{V_{2L; \lceil((k-1)\Gamma + j) / 2 \rceil}^u} + (-1)^i c_\ell a_{R_{\ell; k, j}^u}\,,
\end{equation}
to $\chi_{z; i, k}^{L, u}$ on $B^u_{\ell; i, k, j}$. 
$Z_{\ell; 1, k, j}$ and $Z_{\ell; 2, k, j}$ differ only 
by the sign of $c_\ell a_{R_{\ell; k, j}^u}$. Also notice that $\mathbb{BD}_{\ell + 1}(z)$ is same on the $2m + 1$-th and $2m + 2$-th rectangles (i.e. the 
$R_{\ell; \cdot, \cdot}^u$'s) in $V_{2L}^{\Gamma, u}$. However the sign of $b_\ell a_{\mathbb{BD}_{\ell + 
1}(z)}$ changes inside $Z_{\ell; i, k, j}$. See Figure~\ref{fig:switch_BRW} for an illustration 
of this construction at level $\ell + 1$ (we drop the superscript $u$ whenever $u = 0$). 

Since $c_\ell^2 = 2b_\ell^2$, the distribution of the field $\{\chi_z^N: z \in V_{N; j}^\Gamma\}$ is symmetric with respect to symmetries of the box $V_{N; j}^\Gamma$. This fact will be used later when we construct crossings through $V_{N;j}^\Gamma$'s in the vertical direction. Finally, we define 
$$\tilde{\chi}_z^{N, u}= b_na_{\mathbb{BD}_{\ell+1; 2}(z)} + \chi_z^{N, u} \mbox{ for }u \in A_{n, \Gamma} \mbox{ and } z \in V_{N; j}^{\Gamma, u} = u + V_{N; j}^\Gamma\,.$$
\begin{lemma}
\label{lem:switch_brw}
The Gaussian fields $\{\tilde{\chi}_z^{N}: z \in V_N^\Gamma\}$ and 
$\{\mathcal R_z^{N}: z \in V_N^{\Gamma}\}$ are identically distributed.
\end{lemma}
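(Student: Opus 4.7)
Since both $\{\tilde\chi_z^{N}: z\in V_N^\Gamma\}$ and $\{\mathcal R_z^{N}: z\in V_N^\Gamma\}$ are centered Gaussian, it suffices to check that they have identical covariance structures. The covariance of $\mathcal R^N$ is immediate from its definition: $\cov(\mathcal R_z^N,\mathcal R_w^N)$ vanishes whenever $z,w$ lie in distinct $N$-cells $V_{N;j}^\Gamma$, and otherwise equals $n-k^*(z,w)$, where $k^*(z,w)$ is the smallest $k$ with $\mathbb{BD}_k(z)=\mathbb{BD}_k(w)$. My plan is therefore to compute $\cov(\tilde\chi_z^N,\tilde\chi_w^N)$ directly by unwinding the recursion that defines $\chi^N$ and then to match the two expressions pair by pair.

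I would expand
\[
\tilde\chi_z^N \;=\; b_n a_{V_{N;j(z)}} \;+\; \sum_{\ell=0}^{n-1} Z_{\ell;\,i_z(\ell),k_z(\ell),j_z(\ell)}^{u_z(\ell)},
\]
where the quadruple $(u_z(\ell),i_z(\ell),k_z(\ell),j_z(\ell))$ records, at each level $\ell$, the scale-$2^{\ell+1}$ ancestor $V_{2L}^{\Gamma,u_z(\ell)}$ of $z$ in the dyadic tower and the position of the $L$-box $B_{\ell;i_z(\ell),k_z(\ell),j_z(\ell)}^{u_z(\ell)}$ containing $z$ within it. Because the Gaussians used at different levels are mutually independent and also independent of the top-level pool $\{a_{V_{N;j}}\}$, the covariance $\cov(\tilde\chi_z^N,\tilde\chi_w^N)$ decomposes as a sum of contributions over $\ell$, and within each level only pairs with $u_z(\ell)=u_w(\ell)$ contribute. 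I would then evaluate the level-$\ell$ contribution by partitioning pairs with common ancestor into the sub-cases (a) same $L$-box $B_{\ell;i,k,j}^u$, (b) same sub-rectangle $V_{L;i,k}^{\Gamma,u}$ but different $L$-boxes, (c) same $2L$-cell but different sub-rectangles, and (d) different $2L$-cells of the common ancestor.

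The calibration $c_\ell^2 = 2b_\ell^2$ and the sign choices $(-1)^{(k-1)\Gamma+j}$ and $(-1)^i$ are engineered precisely so that the right cancellations occur within each of these sub-cases: in (c) the shared Gaussian $a_{B_{\ell;2,k,j}^u}$ enters the two $Z$'s with opposite signs, and in the zig-zag configurations governed by the definition of $\tilde R_{\ell;k,j}^u$ the shared $a_{\tilde R}$ enters with opposite signs across consecutive $L$-boxes. The main technical obstacle I anticipate is the combinatorial bookkeeping in case (d) summed over all levels: one must show that the cumulative opposite-sign contributions from the shared Gaussians, aggregated along the zig-zag traversal of the sub-rectangles, exactly cancel the positive contributions coming from $b_n^2 a_{V_{N;j}}$ and from the same-sign pieces, producing the vanishing cross-$N$-cell covariance demanded by the $\Gamma$-fold independent concatenation structure of $\mathcal R^N$. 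Once this cancellation is in hand, the within-$N$-cell covariance reduces by direct summation, using the closed forms of $b_\ell^2$ and $c_\ell^2$, to the desired quantity $n-k^*(z,w)$, completing the verification.
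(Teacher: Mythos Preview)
Your approach---reduce to a covariance computation and unwind the recursion level by level---is correct and is essentially the same route the paper takes. The paper's execution, however, is far more economical: it first asserts (and leaves to the reader) that the restrictions of $\tilde\chi^N$ to the $\Gamma$ cells $V_{N;j}$ are i.i.d., so it suffices to treat a pair $u,v$ inside a single cell. If $\ell$ is their separation level (so $\cov(\mathcal R_u^N,\mathcal R_v^N)=n-\ell$), the paper computes in one line
\[
\cov(\chi_u^N,\chi_v^N)\;=\;-\tfrac{1-4^{-\ell}}{3}\;+\;\sum_{\ell'>\ell}\bigl(1-4^{-\ell'}\bigr)\;=\;n-\ell-\tfrac{1-4^{-n}}{3},
\]
and adding $b_n^2=(1-4^{-n})/3$ yields $n-\ell$. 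The single negative term is the contribution at the step where $u,v$ lie in different $L$-boxes of the same $2L$-box; the identity $c_\ell^2=2b_\ell^2$ forces this contribution to be exactly $-b_\ell^2$ in \emph{every} sub-configuration (same column/different $i$, or adjacent columns sharing a $\tilde R$), so no case split is needed there. Your sub-cases (a)--(c) would rediscover exactly this, just with more bookkeeping.

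One correction to your case (d): for $z,w$ in distinct $N$-cells the top-level Gaussians $a_{V_{N;j(z)}}$ and $a_{V_{N;j(w)}}$ are independent, so there is no $b_n^2$ term to cancel. The vanishing cross-cell covariance comes entirely from $\chi^N$ and is actually structural rather than a delicate cancellation: at every recursion step the $\tilde R$-pairing groups columns $\{2m-1,2m\}$, and because the anchors lie in $A_{\ell+1,\Gamma}$ these paired columns coincide with the $\mathbb{BD}_{\ell+1}$ boxes. Hence two points in different dyadic boxes at a given scale never share an $a_{\tilde R}$ or an $a_{B_{\ell;2,k,j}}$, and once they fall into different sub-rectangles all subsequent Gaussians are drawn from disjoint index sets. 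This is what the paper means by ``not difficult to check''; once seen, it obviates the aggregated zig-zag accounting you were anticipating.
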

\begin{proof}
It is not difficult to check that the fields $\{\tilde{\chi}_z^N: z \in V_{N; j}\}$'s are 
independent and identically distributed for $j \in [\Gamma]$. Hence it suffices to verify that we 
have the correct covariances between field values at all pairs of vertices inside $V_N$. Toward 
this end take a pair $(u, v)$ of vertices in $V_N$ which were separated until level $\ell$,  i.e., 
$\cov(\mathcal R_{u}^N, \mathcal R_{v}^N) = n - \ell$. The covariance between $\chi_u^N$ and 
$\chi_v^N$ is given by
$$\cov(\chi_u^N, \chi_v^N)  = -(1 - 4^{-\ell})/3 + \mbox{$\sum_{\ell + 1 \leq m \leq n}$}(1 - 
4^{-m}) = n - \ell - (1 - 4^{-n})/3\,.$$
Consequently $\cov(\tilde{\chi}_u^N, \tilde{\chi}_v^N) = \cov(\chi_u^N, \chi_v^N) + (1 - 4^{-n})/3 
=  n - \ell$. Hence the lemma follows.
\end{proof}
\begin{remark}
It is clear that it suffices to work with $\{\chi_z^N: z\in V_N^\Gamma\}$ rather than the BRW in 
what follows. Using this construction, in every scale of our optimization, we crucially have effective variance $c_\ell^2 \approx \frac{2}{3}$ that is larger than $1/2$ (which would be the variance we effectively used if we work with the more canonical construction of BRW). 
\end{remark}
\begin{figure}[!htb]
 \centering
\begin{tikzpicture}[semithick, scale = 1]
\draw (-4.7, 0.1) -- (-1.7, 0.1);
\node [scale = 0.8] at (-3.2, 1.6) {$-b_\ell a_{V^\Gamma_{2L;1}} - c_\ell a_{R_{\ell; 1, 1}}$};
\draw (-1.5, 0.1) rectangle (1.5, 3.1);
\node [scale = 0.8] at (0, 1.6) {$b_\ell a_{V^\Gamma_{2L;1}} - c_\ell a_{R_{\ell; 1, 2}}$};
\draw (1.7, 0.1)  rectangle (4.7, 3.1);
\node [scale = 0.8] at (3.2, 1.6) {$-b_\ell a_{V^\Gamma_{2L;2}} - c_\ell a_{R_{\ell; 1, 3}}$};
\draw [decorate,decoration={brace, amplitude = 10pt, raise=4pt},yshift=0pt](-4.7, 3.1) -- (4.7, 3.1); 
\node [scale = 1, above] at (0, 3.6) {$V^\Gamma_{L; 1, 1}$};

\draw (-4.7, -0.1) rectangle (-1.7, -0.1);
\node [scale = 0.8] at (-3.2, -1.6) {$-b_\ell a_{V^\Gamma_{2L;1}} + c_\ell a_{R_{\ell; 1, 1}}$};
\draw (-1.5, -0.1) rectangle  (1.5, -3.1);
\node [scale = 0.8] at (0, -1.6) {$b_\ell a_{V^\Gamma_{2L;1}} + c_\ell a_{R_{\ell; 1, 2}}$};
\draw (1.7, -0.1)  rectangle  (4.7, -3.1);
\node [scale = 0.8] at (3.2, -1.6) {$-b_\ell a_{V^\Gamma_{2L;2}} + c_\ell a_{R_{\ell; 1, 3}}$};
\draw [decorate,decoration={brace, amplitude = 10pt, mirror, raise=4pt},yshift=0pt](-4.7, -3.1) -- (4.7, -3.1); 
\node [scale = 1, below] at (0, -3.6) {$V^\Gamma_{L; 2, 1}$};

\draw [red] (-4.7, -3.1) -- (-4.7, 3.1) -- (-1.7, 3.1) -- (-1.7, -3.1) -- (-4.7, -3.1);

\end{tikzpicture}
 \caption{{\bf Construction of the field $\chi_z^{2L}$ on $V_{L;i,1}^{\Gamma}$}. Here $\Gamma = 
 3$. The red lines indicate the boundary of $R_{\ell; 1, 1}$.}
 \label{fig:switch_BRW}
\end{figure}
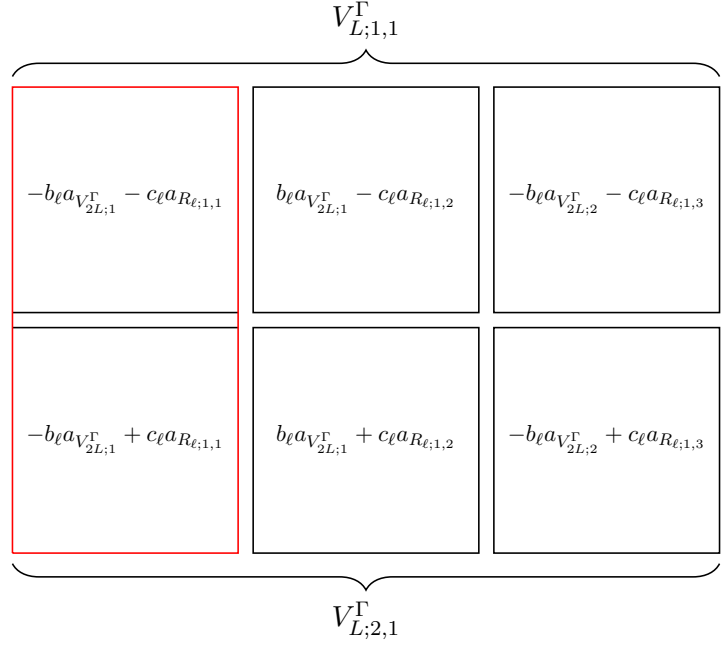

\subsection{Regularized total variation of Brownian motion}
\label{sec:total_variation}
In this subsection we present a result from \cite{Dunlap} that provides a lower bound on the functional defined in \eqref{eq-regularized-total-variation}. This is only one part of the main result in \cite{Dunlap} which also provides an upper bound.

\begin{theorem}
\label{thm:brownian_switch}
Let $\{W_t\}_{0 \leq t \leq 1}$ be a Standard Brownian Motion and $1 > \lambda > 0$. For any partition $P = [t_0, t_1, \cdots, t_{k+1}]$ of $[0, 1]$ where $0 = t_0 < t_1 < \cdots < t_{k+1} = 1$, define the quantity $\Phi_{P, \lambda}$ as
$$\Phi_{P, \lambda} = \mbox{$\sum_{0 \leq i \leq k}$} |W_{t_{i + 1}} - W_{t_i}|  - \lambda k\,.$$
Then there exists a (random) partition $\pa = [t_0^*, t_1^*, \cdots, t_{k^* + 1}^*]$ of $[0, 1]$ such that the following conditions hold:
\begin{enumerate}[(a)]
\item The random vector $(t_1^*, \cdots, t_{k^*}^*, k^*)$ is measurable relative to $\sigma(\{W_t\}_{0 \leq t \leq 1})$.
\item $\E \Phi_{P, \lambda} \geq \frac{1}{\lambda}$.
\item $W_{t_1^*} - W_{t_0^*}, W_{t_2^*} - W_{t_1^*}, \cdots, W_{t_{k^* + 1}^*} - W_{t_{k^*}^*}$ have alternating signs. Furthermore $\P(W_{t_1^*} - W_{t_0^*} > 0) = 0.5$.
\item $|W_{t_{j + 1}^*} - W_{t_j^*}| \geq \lambda$ for all $1 \leq j \leq k^* - 1$.
\end{enumerate}
\end{theorem}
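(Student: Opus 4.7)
The plan is to construct $\pa$ greedily by identifying alternating ``significant'' peaks and troughs of $B$ at scale $\lambda$, using stopping times defined from the running extrema of $B$, and then to verify the four properties in turn, the hard one being (b).

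\textbf{Construction and easy properties.} Set $t_0^* = 0$. Let $\epsilon_0 \in \{\pm 1\}$ be the sign ($+$ for max, $-$ for min) of the first of $B$'s running extrema that is exceeded by $\lambda$, and let $T_1$ be the common first time $\max_{s \leq t} B_s - B_t = \lambda$ or $B_t - \min_{s \leq t}B_s = \lambda$. Set $t_1^*$ to be the location of the corresponding running extremum on $[0, T_1]$. Iterate: given $T_{k-1}$ and $t_{k-1}^*$, let $T_k$ be the first $t > T_{k-1}$ at which $B$ has moved in the direction $(-1)^{k-1}\epsilon_0$ by $\lambda$ from its running $(-1)^{k-1}\epsilon_0$-extremum over $[T_{k-1}, t]$, and set $t_k^*$ to be the location of that extremum. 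Truncate by putting $t_{k^*+1}^* = 1$ for $k^*$ the largest index with $T_k < 1$. Property (a) is then immediate because each $T_k$ is a stopping time, each $t_k^*$ is $\mathcal F_{T_k}$-measurable, and $k^*$ is determined by $\{T_k\}$. Property (c) follows from the construction: the $B_{t_k^*}$ alternate between running maxima and running minima of $B$, so successive increments alternate in sign, and $\epsilon_0$ is uniform on $\{\pm 1\}$ by the reflection symmetry $B \mapsto -B$. Property (d) holds because the stopping rule forces $|B_{t_k^*} - B_{t_{k-1}^*}| \geq \lambda$ for every $1 \leq k \leq k^*$.

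\textbf{The main obstacle: the lower bound (b).} Showing $\E\,\Phi_{\pa,\lambda} \geq 1/\lambda$ is the heart of the matter; the naive estimate from (d) only yields $\Phi \geq 0$. The gain must come from the \emph{overshoot}: each completed increment $|B_{t_k^*} - B_{t_{k-1}^*}|$ exceeds $\lambda$ by a positive random amount because between $t_{k-1}^*$ and $t_k^*$ the Brownian motion wanders past the previous extremum before the opposite-direction $\lambda$-reversal is triggered. I would quantify this by applying Tanaka's formula to the reflected process $B_t - \min_{T_{k-1} \leq s \leq t} B_s$ (and its max analogue) on each interval, summing over $k$, and taking expectations; the stochastic-integral terms vanish by optional stopping (uniform integrability via Burkholder--Davis--Gundy and the time budget $\sum_k(T_k - T_{k-1}) \leq 1$), leaving an identity that relates $\E\sum_k |B_{t_k^*} - B_{t_{k-1}^*}|$ to the expected total local time of the reflected process, which by Brownian scaling equals $1/\lambda$ to leading order. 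Subtracting the switching cost $\lambda\,\E k^*$, bounded above by a Wald-type estimate using the same time budget, then yields the claim.

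\textbf{Where I expect trouble.} Pinning down the sharp constant $1$ rather than some $c<1$ is the principal difficulty, since the scaling argument above must balance $\E\sum|B_{t_k^*} - B_{t_{k-1}^*}|$ against $\lambda\,\E k^*$ with no slack to spare; any crude bound on the overshoot distribution or the hitting-time moments loses a multiplicative constant. A cleaner and probably more robust route would proceed via It\^o's excursion theory, representing $B$ as a Poisson point process of excursions from its running maximum and identifying the $\lambda$-contribution per excursion explicitly, then summing by Campbell's formula. A minor bookkeeping point is the final incomplete segment $[t_{k^*}^*, 1]$, whose contribution to the sum is of order $\E\sqrt{1 - T_{k^*}}$ and hence negligible compared with $1/\lambda$ as $\lambda \to 0$.
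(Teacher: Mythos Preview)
The paper does not contain a proof of this theorem. Section~2.3 simply quotes the result from \cite{Dunlap} (``In this subsection we present a result from \cite{Dunlap}\ldots''), so there is no ``paper's own proof'' against which to compare your proposal.

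That said, a few remarks on your sketch. Your greedy alternating-extremum construction is exactly the natural one, and your arguments for (a), (c), (d) are correct. Your diagnosis that (b) is where the content lies, and that the constant $1$ (as opposed to some $c<1$) is the crux, is also right: the paper genuinely needs the sharp constant, since in the application it is converted (after Brownian scaling and $c_\ell^2 \to 2/3$) into the factor $4/3$ appearing in the bound $\mathcal I_\ell \geq (4/3 - \delta)\gamma^2\Gamma \sum_{j\in J} d_{\gamma,\ell,j}^2 / d_{\gamma,\ell}$, and any loss in the constant would break the induction.

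Your Tanaka/excursion-theory outline is a reasonable direction, but as written it is only a heuristic: you have not actually computed the expected overshoot per cycle, nor matched it against $\lambda\,\E k^*$, and the phrase ``equals $1/\lambda$ to leading order'' hides the whole problem. A cleaner route to the exact constant, closer in spirit to what one finds in \cite{Dunlap}, is to scale to $\lambda = 1$ and time horizon $T$, observe that the inter-switch durations $T_k - T_{k-1}$ are i.i.d.\ copies of the first time $\tau$ that reflected Brownian motion $|W|$ hits level~$1$, and that the corresponding increment $|B_{t_k^*} - B_{t_{k-1}^*}|$ has the law of $1$ plus the local time $L_\tau$ of $|W|$ at~$0$ up to $\tau$ (this is L\'evy's equivalence $M_t \overset{d}{=} L_t$ applied on each excursion interval). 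One then gets $\E\,\Phi = \E k^* \cdot \E L_\tau + O(1)$ by Wald, and the identity $\E L_\tau = \E \tau$ (from $\E|W_\tau| = 1$ and Tanaka) combined with $\E k^* \sim T/\E\tau$ yields $\E\,\Phi \sim T$, i.e.\ $1/\lambda$ after undoing the scaling. This is the kind of exact bookkeeping you would need to make your sketch into a proof.
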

\section{Inductive construction for light crossings}
\subsection{The induction hypotheses}
We will construct a light (left-right) crossing through $V_{L}^\Gamma$ (or equivalently 
$V_{L}^{\Gamma, u}$) inductively for $L = 2^\ell$ and $\ell = 0, \ldots, n$. Throughout this 
section whenever we refer to a crossing of $V_L^{\Gamma, u}$, it is implicitly assumed 
that the underlying field is $\{\chi_z^{L, u}: z \in V_{L}^{\Gamma, u}\}$ unless stated 
otherwise. For technical convenience, let us assume that for each $\ell \geq 0$ and $u \in 
A_{\ell+1, \Gamma}$ we have an independent standard Brownian motion $\{W_t^{u, \ell}\}_{0 \leq t \leq 2\Gamma}$ that is also independent of $\{a_{B}\}_{B \in \mathbb{B}\mathbb{D}_k, k \geq 0}$. We sample $a_{R_{\ell; k, j}^u}$'s as increments of this process at appropriate time points. We are now ready to state our induction hypotheses. For $\ell = 0$, we 
have only one option, i.e., to take the straight line as our crossing. This crossing has weight exactly $\Gamma$. Suppose that for each $1 \leq \ell' \leq \ell$ and $u \in A_{\ell', \Gamma}$, we have a crossing $\pi^{*,u,\ell'}$ of $V_{2^{\ell'}}^{\Gamma, u}$, identically distributed for different $u$, that satisfies the following properties:
\begin{enumerate}
\item \label{induct:hypo2} $\pi^{*,u,\ell'}_{L'/2, coarse}$ is simple and measurable relative to $\sigma\big(\{W_t^{u, \ell' - 1}\}_{0 \leq t \leq 2\Gamma}\big)$.
\item \label{induct:hypo3} The Law of $\pi^{*,u,\ell'}_{L'/2, coarse}$ is invariant with respect 
to reflection about the line $y = u_y + 2^{\ell' - 1} - 0.5$, i.e. the horizontal line passing through the center of $V_{2^{\ell'}}^{\Gamma, u}$.
\item \label{induct:hypo4}For $j \in [\Gamma]$, denote the line $x = u_x + jL' - 1$ as $\mathbb L_{jL'}$. Let $z_{j, \ell'}^u$ be the point in the set $\{u, u + (0, L'/2), $ $u + (\Gamma L'/2, 0), u + (\Gamma L'/2, L'/2)\}$ such that the point where $\pi^{*, u, \ell'}$ hits $\mathbb L_{jL'}$ for the last time lies within $V_{L'/2}^{\Gamma, z_{j, \ell'}^u}$. Also for $j \in \mathbb N^+$, let $v_{j, L', L''}^u$ be the last point along $\pi^{*,u, \ell'}_{L'', coarse}$ 
that lies to the left of $\mathbb L_{jL'}$. Then $v_{j, L', L'_{100}}^u = v_{2j, L'/2, 
L'_{100}}^{z_{j, \ell'}^u}$, i.e., the crossings $\pi^{*, u, \ell'}$ and $\pi^{*, z_{j, \ell'}^u, 
\ell' - 1}$ hit $\mathbb L_{jL'}$ for the last time in the same $L'_{100} \times L'_{100}$ box. Here $L'_{100} = \max(2^{-100} L', 1)$.
\item \label{induct:hypo5}$\E (\sum_{z \in \pi^{*, u, \ell'}}\mathrm{e}^{\gamma \chi_z^{\ell'}}) \leq 2\Gamma(2 - 0.2\gamma^2)^{\ell'}$.
\end{enumerate}
One of the consequences of our induction hypotheses, which is crucial for our analysis, is that 
$v_{j, L, \delta L}^u$ is uniformly distributed for all $\ell' \leq \ell$ as given by the 
following lemma.
\begin{lemma}
\label{lem:uniform}
Let $R_{\ell; j}^u$ be the unique $L_{100} \times L$ rectangle that shares its right boundary with 
$V_{L; j}^u$. Also let $\{B_{\ell; j, m}^u\}_{1 \leq m \leq L/L_{100}}$ be the collection of boxes in $\mathbb{BD}_{\log_2 L_{100}}$ that 
comprise $R_{\ell; j}^u$. Now if the induction hypotheses \ref{induct:hypo2}, \ref{induct:hypo3} and \ref{induct:hypo4} hold for all $\ell' \leq \ell$, then $v_{j, L, L_{100}}^u$ is distributed uniformly on the set $\{c_{\ell; j, m}^u\}_{1 \leq m \leq L / L_{100}}$, where $c_{\ell; j,m}^u$ is the center of the box $B_{\ell; j,m}^u$. 
\end{lemma}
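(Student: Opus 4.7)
The plan is to induct on $\ell$: hypothesis (\ref{induct:hypo4}) lets me trade a level-$\ell$ last-hit for a level-$(\ell-1)$ last-hit, and the reflection symmetry (\ref{induct:hypo3}) supplies a fresh fair coin at each step that doubles the resolution. For $\ell = 0$ we have $L = L_{100} = 1$ and the claim is trivial, and for $\ell = 1$ the set $\{c_{1;i,k,j,m}^u\}$ has exactly two points, so (\ref{induct:hypo3}) gives the $50$-$50$ law directly.

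For the induction step at level $\ell$, hypothesis (\ref{induct:hypo4}) asserts $v_{j,L,L_{100}}^u = v_{2j,L/2,L_{100}}^{z_{j,\ell}^u}$, so the task reduces to analysing the law of the right-hand side. I would split its randomness into two pieces: (i) the choice $z_{j,\ell}^u$, which by (\ref{induct:hypo2}) is measurable with respect to $\{W_t^{u,\ell-1}\}_{0 \le t \le 2\Gamma}$; and (ii) the sub-crossing $\pi^{*,z_{j,\ell}^u,\ell-1}$, whose conditional law given $z_{j,\ell}^u = u'$ coincides with that of $\pi^{*,u',\ell-1}$ and is driven by lower-level Brownian motions independent of $\{W_t^{u,\ell-1}\}$. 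The horizontal coordinate of $z_{j,\ell}^u$ is forced by the position of $\mathbb L_{jL}$ inside $V_L^{\Gamma,u}$, whereas the top-bottom reflection from (\ref{induct:hypo3}) exchanges the two candidate vertical halves and therefore forces the vertical coordinate of $z_{j,\ell}^u$ to be $50$-$50$.

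For piece (ii), the induction hypothesis applied to $\pi^{*,z_{j,\ell}^u,\ell-1}$ gives that $v_{2j,L/2,L_{100}/2}^{z_{j,\ell}^u}$ is uniform over the $L/L_{100}$ cells of side $L_{100}/2$ stacked in an $(L_{100}/2) \times (L/2)$ column inside the chosen quarter. Coarsening from scale $L_{100}/2$ to $L_{100}$ pairs two vertically consecutive fine cells into each $L_{100} \times L_{100}$ coarse cell---no horizontal pairing happens, since the fine column occupies only half the width of a coarse column---giving uniformity over the $L/(2L_{100})$ coarse cells that tile one of the two horizontal halves of $R_{\ell;i,k,j}^u$. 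Combining this with the independent top-vs.-bottom fair coin from piece (i) then yields uniformity over all $L/L_{100}$ centers. For $\ell \le 100$ one has $L_{100} = L'_{100} = 1$ and no coarsening is needed, so the same argument simply doubles $L/2$ equally likely positions into $L$.

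The main obstacle I foresee is the bookkeeping at the interface of the two levels: pinpointing which $L_{100} \times L_{100}$ column of $R_{\ell;i,k,j}^u$ is tiled by the coarsening of the level-$(\ell-1)$ column (tracking the coordinate shift induced by the horizontal component of $z_{j,\ell}^u$, a point made slightly delicate by $\Gamma$ being odd so that the four quarter-rectangles of $V_L^{\Gamma,u}$ do not align with the $L$-columns of $V_L^{\Gamma,u}$), and justifying the conditional independence between pieces (i) and (ii) rigorously via the measurability asserted in (\ref{induct:hypo2}). Once that is in place, the lemma says exactly that one fresh fair binary digit gets propagated at each of the $\log_2(L/L_{100})$ scales.
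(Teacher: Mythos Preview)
Your approach is essentially the same as the paper's: both extract one fair top/bottom bit at each scale from hypothesis~\ref{induct:hypo3}, use hypothesis~\ref{induct:hypo2} for the independence of these bits across scales, and invoke hypothesis~\ref{induct:hypo4} to track the last-hit box down the levels. The only structural difference is that the paper does not induct on the lemma statement itself but simply collects the $\log_2(L/L_{100})$ bits directly, which incidentally sidesteps the $L_{100}/2$-to-$L_{100}$ coarsening bookkeeping you flag as an obstacle, since no intermediate claim at resolution $(L/2)_{100}$ is ever formulated.
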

\begin{proof}
Notice that $v_{j, L, L/2}^u$ is determined by whether $\pi_{L/2, coarse}^{*, u, \ell}$ exits the box $V_{L; j}^u$ through the segment $\{u_x + jL - 1\} \times [u_y, u_y + L/2 - 1]$ or through the segment $\{u_x + jL - 1\} 
\times [u_y + L/2, u_y + L - 1]$. Hypothesis~\ref{induct:hypo3} tells us that these choices 
have probability $1/2$ each. Conditional on such a 
choice, we then look at $v_{2j, L/2, L/4}^{z_{j, 
\ell}^u}$ (see hypothesis~\ref{induct:hypo4}). By hypothesis~\ref{induct:hypo2}, $v_{2j, L/2, L/4}^{z_{j, 
\ell}^u}$ is independent of $v_{j, L, L/2}^u$. 
Consequently by hypothesis~\ref{induct:hypo3}, the two possible choices for $v_{2j, L/2, L/4}^{z_{j, \ell}^u}$ 
have probability $1/2$ each given the choice of $v_{j, L, 
L/2}^u$. Now noting that $v_{2j, L/2, 
L/4}^{z_{j, \ell}^u} = v_{j, L, L/4}^u$ by hypothesis~\ref{induct:hypo4}, we get that $v_{j, L, L/4}^u$ is uniformly distributed over the set of all 
4 possible choices. Iterating this argument $\log_2 L/L_{100}$ times gives us the lemma.
\end{proof}
\subsection{The induction step}
Now we will carry out the induction step. It suffices to 
produce a crossing for $u = 0$. As usual, we will drop 
$u$ from all the superscripts when $u$ is origin. In the following paragraph we give a broad overview of our 
strategy. This will be explained in more detail later, as we move to the different cases.

We will first decide the $L$-coarsening of $\pi^{*, \ell 
+ 1}$. It would consist of a sequence $H_1, H_2, \cdots, H_{R + 1}$ of horizontal segments (possibly of length 0) from left to right such that every two successive 
segments are connected by a vertical segment. See Figure~\ref{fig:coarse_construct} for an illustration. 
Notice that this is the only possible option for $\pi^{*, \ell + 1}_{L, coarse}$, if it has to be simple as required by induction hypothesis~\ref{induct:hypo2}. 
So we can ``encode'' $\pi^{*, \ell + 1}_{L, coarse}$ as a sequence of $\{1, 2\}$ valued random variables $\{i_{j, k}\}_{j \in [\Gamma], k \in [2]}$, where $i_{k, j} = 1$ or $2$ accordingly as $\pi^{*,\ell + 1}_{L, coarse}$ enters the rectangle $R_{\ell; k, j}$ through $B_{\ell; 1, k, j}$ or $B_{\ell; 2, k, j}$ respectively. 
We refer to this as a \emph{switching strategy}. After defining $\pi^{*, \ell + 1}_{L, coarse}$, we will 
construct the path $\pi^{*, \ell + 1}$. From the previous level we have four crossings $\{\pi^{*, \ell}_{i, k}\}_{i \in [2], k \in [2]}$ respectively in $\{V_{L;i,k}^\Gamma\}_{i \in 
[2], k \in [2]}$. We will ``join'' the paths $\pi^{*, \ell}_{i, 1}$ and $\pi^{*, \ell}_{i, 2}$ into a crossing 
$\pi^{*, \ell+1}_i$ of $V_{2L}^\Gamma$ for $i \in [2]$. 
Now let $H_1^* = H_1$ and for $2 \leq r \leq R + 1$, let $H_r^*$ be the subsegment of $H_r$ consisting of all but 
the left endpoint of $H_r$. Thus the points in $\cup_{r \in [R+1]} H_r^*$ span the entire horizontal range of points in $\cup_{r \in [R+1]} H_r$ while no two points in the former share the same horizontal coordinate. 
Since $L$-coarsenings of the crossings at level $\ell$ are also simple, it follows that the vertices of $\pi_i^{*, \ell + 1}$ lying within the $L \times L$ boxes intersecting $H_r^\star$ define a subpath of 
$\pi_i^{*, \ell + 1}$, say, $h_r^{*, \ell + 1}$. Here $i$ is 1 or 2 accordingly as $H_r$ lies in the top or bottom row of $V_{2L}^\Gamma$ (See Figure~\ref{fig:coarse_construct} and the discussions 
preceding Figure~\ref{fig:geodesic_new}). Finally we will connect $h_r^{*, \ell + 1}$ and $h_{r+1}^{*, \ell + 1}$ by an appropriate $\mathcal L$-segment $v_r^{*, \ell 
+ 1}$ at level $\ell$. The paths $h_1^{*, \ell + 1}, v_1^{*, \ell + 1}, h_2^{*, \ell + 1}, \cdots, v_{R}^{*, \ell + 1}, h_{R + 1}^{*, \ell + 1}$ define the crossing 
$\pi^{*, \ell + 1}$. Note that the $L$-coarsening of $\pi^{*, \ell + 1}$ is the very one that we started with.

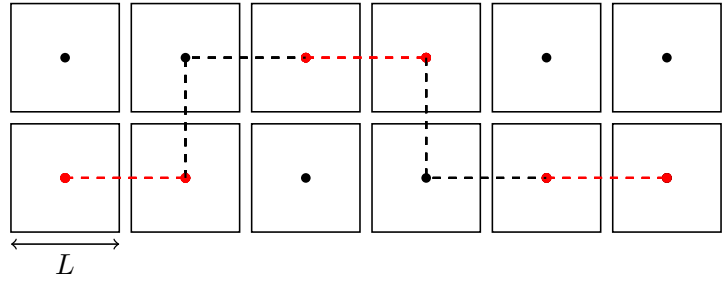
\begin{figure}[!htb]
 \centering
\begin{tikzpicture}[semithick, scale = 0.8]
\draw (0.1, 0.1) rectangle (1.9, 1.9);

\draw (2.1, 0.1) rectangle (3.9, 1.9);

\draw (4.1, 0.1) rectangle (5.9, 1.9);

\draw (-0.1, 0.1) rectangle (-1.9, 1.9);

\draw (-2.1, 0.1) rectangle (-3.9, 1.9);

\draw (-4.1, 0.1) rectangle (-5.9, 1.9);

\draw (0.1, -0.1) rectangle (1.9, -1.9);

\draw (2.1, -0.1) rectangle (3.9, -1.9);

\draw (4.1, -0.1) rectangle (5.9, -1.9);

\draw (-0.1, -0.1) rectangle (-1.9, -1.9);

\draw (-2.1, -0.1) rectangle (-3.9, -1.9); 

\draw (-4.1, -0.1) rectangle (-5.9, -1.9);
\draw [<->] (-5.9, -2.1) -- (-4.1, -2.1);
\node [scale = 1, below] at (-5, -2.1) {$L$};

\foreach \x in {-5, -3, -1, 1, 3, 5}{

\foreach \y in {-1, 1}{
         \fill (\x, \y) circle [radius = 0.08];
}

\fill[red] (-5, -1) circle [radius = 0.08];
\fill[red] (-3, -1) circle [radius = 0.08];
\fill[red] (-1, 1) circle [radius = 0.08];
\fill[red] (1, 1) circle [radius = 0.08];
\fill[red] (3, -1) circle [radius = 0.08];
\fill[red] (5, -1) circle [radius = 0.08];

\draw [thick, dashed, red] (-5, -1) -- (-3, -1);

\draw [thick, dashed] (-3, -1) -- (-3, 1) -- (-1, 1);

\draw [thick, dashed, red] (-1, 1) -- (1, 1);

\draw [thick, dashed] (1, 1) -- (1, -1) -- (3, -1);

\draw [thick, dashed, red]  (3, -1) -- (5, -1);
}
\end{tikzpicture}
 \caption{{\bf $L$-coarsening of $\pi^{*, \ell+1}$}. Here $\Gamma = 3$ and $R = 2$. The three red segments are $h_1^{*, \ell + 1}, h_2^{*, \ell + 1}$ and $h_3^{*, \ell + 1}$ from left to right.}
 \label{fig:coarse_construct}
\end{figure}
We also need some notations in order to track the change in expected weight of crossings between 
two levels. To this end define,
\begin{align*}
D_{\gamma, \ell} &= \sum_{v \in \pi^{*, \ell}} e^{\gamma \chi_v^L} \mbox{ and } d_{\gamma, \ell} 
= \E  D_{\gamma, \ell}\,,\\
D_{\gamma, \ell, j} &=  \sum_{v\in \pi^{*,\ell} \cap V_{L, j} } e^{\gamma \chi_v^L} \mbox{ and } 
d_{\gamma, \ell, j}  = \E D_{\gamma, \ell, j} \mbox{ for } j\in [ \Gamma]\,.
\end{align*} 
Also define for $k \in [2]$,
$$\tilde{D}_{\gamma, \ell; k} = \sum_{j \in [\Gamma]}\sum_{v\in \pi_{i_{j, k}, k}^{*, \ell}} e^{\gamma \chi_v^{2L}}\,,$$
where $\{i_{j, k}\}_{j \in [\Gamma], k \in [2]}$ is the 
switching strategy that we use. Notice that if $\pi^{*, \ell + 1}$ is constructed according to the general strategy described in the last paragraph, then $\tilde{D}_{\gamma, \ell; k}$ gives an upper bound on the total weight of vertices in $\pi^{*, \ell + 1}$ coming from $\pi_{i, k}^{*, \ell}$'s. 
Recalling from \eqref{eq:chi_def} and \eqref{eq:Z_def} that $\chi_z^{2L} = \chi_{z; i, k}^{L} + Z_{\ell; i, k, j}$ for all $z \in B_{\ell;i, k, j}$, we can then write
$$\E \tilde{D}_{\gamma, \ell; k} = \sum_{j \in [\Gamma]}\E \Big(\e^{\gamma Z_{\ell; i, k, j}}\sum_{v\in \pi_{i_{j, k}, k}^{*, \ell}} e^{\gamma \chi_{v; i, k}^{L}}\Big) = \sum_{j \in [\Gamma]}\E \e^{\gamma Z_{\ell; i, k, j}}\E \Big(\sum_{v\in \pi_{1, k}^{*, \ell}} e^{\gamma \chi_{v; 1, k}^{L}}\Big) = \sum_{j \in [\Gamma]} d_{\gamma, \ell, j}\e^{\gamma Z_{\ell; i, k, j}}\,,$$
where for the second step we assumed that the switching strategy is independent of $\{\chi_{.}^{L', .}\}$ for 
$L' \leq L$ (induction hypothesis~\ref{induct:hypo2}). 
Writing 
$$\e^{\gamma Z_{\ldots}} = (1 + \gamma Z_{\ldots} + \frac{a_{\ell + 1}^2\gamma^2}{2}) + \frac{\gamma^2}{2}(Z_{\ldots}^2 - a_{\ell + 1}^2) + \big(\e^{\gamma Z_{\ldots}} - (1 + \gamma Z_{\ldots} + \frac{\gamma^2Z_{\ldots}^2}{2})\big)\,$$
for $a_{\ell+1}^2 = b_{\ell+1}^2 + c_{\ell+1}^2 = 1 - 4^{-(\ell + 1)}$, we get
\begin{equation}
\label{eq:weight_upper_bnd}
\E \tilde{D}_{\gamma, \ell; k} = \E \sum_{j = 1}^\Gamma d_{\gamma, \ell, j} \big(1 + \gamma 
Z_{\ell; i_{j, k}, k, j} + \frac{a_{\ell + 1}^2\gamma^2}{2}\big) + \E \mathrm{Err}_{\gamma, \ell; k, 1} + \E \mathrm{Err}_{\gamma, \ell; k, 2}\,,
\end{equation}
where, $\mathrm{Err}_{\gamma, \ell; k, 1} = \frac{\gamma^2}{2}\sum_{j = 1}^\Gamma d_{\gamma, \ell, j} (Z_{\ell; i_{j, k}, k, 
j}^2 - a_{\ell + 1}^2)$ and
$$\mathrm{Err}_{\gamma, \ell; k, 2} = \sum_{j = 1}^\Gamma d_{\gamma, \ell, j} \big(\mathrm{e}^{\gamma Z_{\ell; i_{j, k}, k, j}} - (1 + \gamma Z_{\ell; i_{j, k}, k, j} + \frac{\gamma^2Z_{\ell; i_{j, k}, k, j}^2}{2})\big)\,.$$
It is easy to see that $\E |\mathrm{Err}_{\gamma ,\ell; k, 2}| = d_{\gamma, \ell}O(\gamma^3)$. On 
the other hand, we get the following from expanding the terms in $\mathrm{Err}_{\gamma, \ell; k, 
1}$ (see \eqref{eq:Z_def}):
\begin{align*}
\mathrm{Err}_{\gamma, \ell; k, 1} = \frac{b_\ell^2\gamma^2}{2}\sum_{j = 1}^\Gamma d_{\gamma, \ell, j} (a_{V_{2L; \lceil( (k-1) \Gamma + j)/2\rceil }}^2 - 1) + \frac{c_{\ell}^2\gamma^2}{2}\sum_{j = 1}^\Gamma d_{\gamma, \ell, j}(a_{R_{\ell; k, j}}^2 - 1) \\ 
+ \gamma^2 b_\ell c_\ell (-1)^{(k-1)\Gamma}\sum_{j = 1}^\Gamma d_{\gamma, \ell, j} 
a_{V_{2L; \lceil( (k-1) \Gamma + j)/2\rceil }} (-1)^{j + i_{j, k}}a_{R_{\ell; k, j}}\,.
\end{align*}
The expectations of the first two terms in the expansion above are obviously 0. As to the third term, 
recall that our switching mechanism is independent of the random variables $a_{V_{2L; \lceil( (k-1) \Gamma + j)/2\rceil }}$'s by induction hypothesis~\ref{induct:hypo2}. Consequently $\E \mathrm{Err}_{\gamma, \ell; k, 
1} = 0$. Thus we only need to reckon with the first term in the right hand side of 
\eqref{eq:weight_upper_bnd} so far as the contributions from $\pi_{i, k}^{*, \ell}$'s are 
concerned. This will, of course, depend on the particular switching strategy we adopt. Finally we 
need to add the contributions from $v_r^{*, \ell + 1}$'s and the extra crossings that we make in 
order to link $\pi_{i, 1}^{*, \ell}$ and $\pi_{i, 2}^{*, \ell}$ for $i \in [2]$.\par

Now we carry out the detailed computations. It is expected that $d_{\gamma, \ell, j}$ is approximately uniform over 
$j\in [\Gamma]$. However, proving this requires some effort. It turns out easier to treat the ``hypothetical'' case that $d_{\gamma, \ell}$ is dominated by a small 
number of $d_{\gamma, \ell, j}$'s. We also include a case to deal with small values of $\ell$ when $c_\ell^2$ is not big enough for our ``main'' strategy to work.

\noindent {\bf Case 1:} $\sum_{j=1}^\Gamma d_{\gamma, \ell, j} \mathbf 1_{\{d_{\gamma, \ell, j} 
\geq \Gamma^{-2/3} d_{\gamma, \ell}\}} \geq d_{\gamma, \ell} \Gamma^{-1/10}$. 

\vspace*{0.15cm}

It is easy to see that 
\begin{equation}\label{eq-var-lower-bound}
\sum_{j=1}^\Gamma d_{\gamma, \ell, j}^2 \geq d_{\gamma, \ell}^2 \Gamma^{-13/15}\,.
\end{equation}
In this case our switching strategy is simple. For each $k = 1, 2$, we choose the rectangle that 
minimizes $\sum_{j=1}^\Gamma d_{\gamma, \ell, j} Z_{\ell; i, k, j}$. Recall from \eqref{eq:Z_def} that
$$Z_{\ell; i, k, j} = (-1)^{(k - 1)\Gamma + j}b_\ell a_{V_{2L; \lceil((k-1)\Gamma + j / 2) \rceil}} + (-1)^i c_\ell a_{R_{\ell; k, j}}\,.$$
Since $a_{V_{2L; \lceil ((k-1)\Gamma + j / 2) \rceil}}$'s are centered, we have 
$$\E \Big(\min_{i = 1, 2}\sum_{j=1}^\Gamma d_{\gamma, \ell, j}Z_{\ell; i, k, j}\Big) = -c_\ell \E |\sum_{j=1}^\Gamma d_{\gamma, \ell, j}a_{R_{\ell; i, k, 
j}}|\,.$$ 
But $\sum_{j=1}^\Gamma d_{\gamma, \ell, j}a_{R_{\ell; i, k, j}}$ is a centered Gaussian variable with variance 
$\sum_{j = 1}^\Gamma d_{\gamma, \ell, j}^2$. Thus from \eqref{eq-var-lower-bound} we get that for $k \in [2]$,
\begin{align*}
\E \tilde{D}_{\gamma, \ell; k} \leq d_{\gamma, \ell}(1 + \gamma^2/ 2 + O(\gamma^3)) + \E 
\min_{i=1, 2} \sum_{j=1}^\Gamma d_{\gamma, \ell, j} \gamma Z_{\ell; i, k, j} \\ \leq 
d_{\gamma, \ell} (1 + O(\gamma^2) - \Omega(\gamma)\Gamma^{-13/30})\leq d_{\gamma, \ell} (1 - 
\Omega(\gamma^{1.9}))\,,
\end{align*}
where the last inequality follows from our assumption that $\Gamma \geq \alpha / \gamma^2$. Let $u_k \in A_{\ell, \Gamma}$ be such that $V_L^{\Gamma, u_k}$ is 
the minimizer. In order to connect $\pi^{*, u_1, \ell}$ and $\pi^{*, u_2, \ell}$ into a crossing of 
$V_{2L}^{\Gamma}$, we need some additional paths. To this end we add two vertical crossings through the rectangles $V_{\mathrm{left}} = \big([(\Gamma-1) L , \Gamma L - 1] \times [0, 2L-1]\big) \cap \Z^2$ and $V_{\mathrm{right}} = \big([\Gamma L , (\Gamma + 1)L - 
1] \times [0, 2L-1]\big) \cap \Z^2$. We also add two horizontal crossings through the rectangles $V_{\mathrm{up}} = \big([(\Gamma - 1)L, (\Gamma + 1) L - 1] \times [L, 2L-1]\big) \cap \Z^2$ and $V_{\mathrm{down}} = \big([(\Gamma - 1)L, (\Gamma + 1) L 
- 1] \times [0, L-1]\big) \cap \Z^2$. All these 
crossings have minimum weights. We can now form $\pi^{*, \ell+1}$ by concatenating $\pi^{*, u_1, \ell}, \pi^{*, 
u_2, \ell}$ and these four crossings. See 
Figure~\ref{fig:Case1} for an illustration. It is obvious from the construction that $\pi^{*, \ell + 1}_{L, coarse}$ is simple, symmetric in law with respect to the line $y = L - 0.5$ and entirely determined by the random variables $\{a_{R_{\ell; k, j}}\}_{k \in [2], 
j \in [\Gamma]}$. Thus the hypotheses~\ref{induct:hypo2} 
and \ref{induct:hypo3} hold. The only possible case where the hypothesis \ref{induct:hypo4} could fail would be the line $\mathbb L_{\Gamma L}$. But we do not need to 
consider this line as $\Gamma$ is odd. Finally notice that, since $\Gamma \gg 1$ and minimum crossing weights through adjacent rectangles are super-additive, each of the four additional crossings will have expected weight 
bounded by, say, $\frac{2.1}{\Gamma}d_{\gamma, \ell}$. 
Using the fact that $\alpha \gg 1$, we then deduce that in this case
\begin{equation*}
d_{\gamma, \ell + 1} \leq (2 - \gamma^2/2) d_{\gamma, \ell} \leq 2\Gamma(2 - 0.2)^{\ell + 1}\,.
\end{equation*}

\begin{figure}[!htb]
 \centering
\begin{tikzpicture}[semithick, scale = 0.8]
\draw [dashed] (0, -1.9) -- (0, 1.9);
\draw [dashed] (-5.9, 0) -- (5.9, 0);
\draw (0.1, 0.1) rectangle (1.9, 1.9);

\draw (2.1, 0.1) rectangle (3.9, 1.9);

\draw (4.1, 0.1) rectangle (5.9, 1.9);

\draw (-0.1, 0.1) rectangle (-1.9, 1.9);

\draw (-2.1, 0.1) rectangle (-3.9, 1.9);

\draw (-4.1, 0.1) rectangle (-5.9, 1.9);

\draw (0.1, -0.1) rectangle (1.9, -1.9);

\draw (2.1, -0.1) rectangle (3.9, -1.9);

\draw (4.1, -0.1) rectangle (5.9, -1.9);

\draw (-0.1, -0.1) rectangle (-1.9, -1.9);

\draw (-2.1, -0.1) rectangle (-3.9, -1.9); 

\draw (-4.1, -0.1) rectangle (-5.9, -1.9);

\draw [red, style = {decorate, decoration = {snake, amplitude = 0.6}}] (-5.9, 1) -- (-4.1, 1);

\draw [red, style = {decorate, decoration = {snake, amplitude = 0.6}}] (-3.9, 1) -- (-2.1, 1);

\draw [red, style = {decorate, decoration = {snake, amplitude = 0.6}}] (-1.9, 1) -- (-1, 1);

\draw [style = {decorate, decoration = {snake, amplitude = 0.6}}] (-1, 1) -- (-0.1, 1);

\draw [style = {decorate, decoration = {snake, amplitude = 0.6}}] (-1, 1.9) -- (-1, 1);
\draw [red, style = {decorate, decoration = {snake, amplitude = 0.6}}] (-1, 1) -- (-1, 0.1);
\draw [red, style = {decorate, decoration = {snake, amplitude = 0.6}}] (-1, -0.1) -- (-1, -0.55);
\draw [style = {decorate, decoration = {snake, amplitude = 0.6}}] (-1, -0.55) -- (-1, -1.9);
\draw [style = {decorate, decoration = {snake, amplitude = 0.6}}] (-1.9, -0.55) -- (-1, -0.55);
\draw [red, style = {decorate, decoration = {snake, amplitude = 0.6}}] (-1, -0.55) -- (-0.1, -0.55);
\draw [red, style = {decorate, decoration = {snake, amplitude = 0.6}}] (0.1, -0.55) -- (1, -0.55);
\draw [style = {decorate, decoration = {snake, amplitude = 0.6}}] (1, -0.55) -- (1.9, -0.55);
\draw [red, style = {decorate, decoration = {snake, amplitude = 0.6}}] (1, -0.55) -- (1, -1);
\draw [style = {decorate, decoration = {snake, amplitude = 0.6}}] (1, -0.55) -- (1, -0.1);
\draw [style = {decorate, decoration = {snake, amplitude = 0.6}}] (1, -1) -- (1, -1.9);

\draw [red, style = {decorate, decoration = {snake, amplitude = 0.6}}] (5.9, -1) -- (4.1, -1);

\draw [red, style = {decorate, decoration = {snake, amplitude = 0.6}}] (3.9, -1) -- (2.1, -1);

\draw [red, style = {decorate, decoration = {snake, amplitude = 0.6}}] (1.9, -1) -- (1, -1);

\draw [style = {decorate, decoration = {snake, amplitude = 0.6}}] (1, -1) -- (0.1, -1);

\end{tikzpicture}
 \caption{{\bf The construction of $\pi^{*, \ell+1}$ for Case 1.} Here $\Gamma = 3$. 
The segments of $\pi^{*, \ell+1}$ have been colored in red.}
 \label{fig:Case1}
\end{figure}
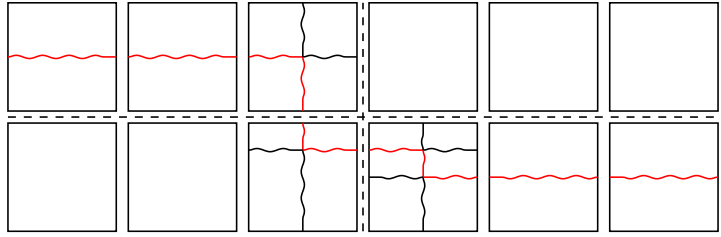
\noindent {\bf Case~2:} $\ell < 60$. 

\vspace*{0.15cm}

Here we use the strategy described in Case~1. It then follows from the previous discussions that
\begin{equation*}
d_{\gamma, \ell + 1} \leq (2 + 2\gamma^2)d_{\gamma, \ell}\,.
\end{equation*}
Since $d_{\gamma, 0} = \Gamma$, we get by repeated application of the previous inequality 
\begin{equation*}
d_{\gamma, \ell + 1} \leq \Gamma(2 + 2\gamma^2)^{\ell + 1} \leq 2\Gamma(2 - 0.2\gamma^2)^{\ell + 1}\,.
\end{equation*}
\noindent {\bf Case~3.} $\sum_{j=1}^\Gamma d_{\gamma, \ell, j} \mathbf 1_{\{d_{\gamma, \ell, j} \geq 
\Gamma^{-2/3} d_{\gamma, \ell}\}} \leq d_{\gamma, \ell} \Gamma^{-1/10}$ and $\ell \geq 60$. 

\vspace*{0.15cm}

This is the main and real case. We will also assume that 
$$d_{\gamma, \ell} \geq 2\Gamma (2 - 0.2\gamma^2)^{\ell} (1 - \gamma^2)\,,$$
for otherwise we can simply follow the strategy in Case~1. In this case our switching strategy 
will in fact be informed by the way we plan to contruct the $\mathcal L$-segments $v_r^{*, \ell + 
1}$'s. So we first give a detailed description of the latter. 
We refer the reader to Fig~\ref{fig:vertical_embed} for an illustration of the construction.\\
For each $r \leq R$, let $V_{L;{U(r)}}$ and $V_{L; D(r)}$ respectively denote the $L \times L$ 
boxes in the top and bottom row of $V_{2L}^\Gamma$ containing the endpoints of $H_r$ and $H_{r + 
1}$. Call the $L \times 2L$ rectangle formed by $V_{L; U(r)}$ and $V_{L; D(r)}$ as $V_{L; r}$. 
Suppose the right boundary of $V_{L; r}$ lies along the line $\mathbb{L}_{j_rL}$ where $j_r \in 
[2\Gamma - 1]$. It suffices to consider the case when $j_r \leq \Gamma$. Let $v_{j_r, L, L_{100}}^u$ and $v_{j_r, L, L_{100}}^d$ respectively denote the last points along $\pi^{*, (0, L), 
\ell}_{L_{100}, coarse}$ and $\pi^{*,\ell}_{L_{100}, coarse}$ that lie to the left of $\mathbb 
L_{j_rL}$ where $L_{100} = \max(2^{-100}L, 1)$. Now if $L_{100} = 1$, we simply set $v_r^{*, \ell + 1}$ to be the (discrete) straight line joining $v_{j_r, L, 
L_{100}}^u$ and $v_{j_r, L, L_{100}}^d$. Otherwise $v_{j_r, L, L_{100}}^u$ and $v_{j_r, L, L_{100}}^d$ are the centers of two $L_{100} \times L_{100}$ boxes, say, 
$A_{j_r, L}^u$ and $A_{j_r, L}^d$ respectively. Let $\tilde{A}_{j_r, L}^u$ (respectively $\tilde{A}_{j_r, L}^d$) be the box around $A_{j_r, L}^u$ (respectively 
$\tilde{A}_{j_r, L}^d$) of side length $2L_{100}$. Then we construct light contours in each of the annuli $\tilde{A}_{j_r, L}^u \setminus A_{j_r, L}^u$ and 
$\tilde{A}_{j_r, L}^d \setminus A_{j_r, L}^d$. These can easily be achieved by concatenating four minimum weight 
crossings in four rectangles. In addition, we let $\tilde A_r$ be a rectangle whose left top corner is $v_{j_r, L, L_{100}}^u + (-0.5, L_{100} - 0.5)$ and right bottom corner is $v_{j_r, L, L_{100}}^d + (L_{100}/2 - 0.5, -L_{100} 
+ 0.5)$. We construct a minimum weight 
vertical crossing through $\tilde A_r$. Finally we concatenate these light crossings into $v_r^{*, \ell 
+ 1}$. The total expected weight of these crossings can 
be bounded by our induction hypotheses. Notice that we can always concatenate these paths in a way so that our construction of $v_r^{*, \ell + 1}$ obeys induction hypothesis~\ref{induct:hypo4}.
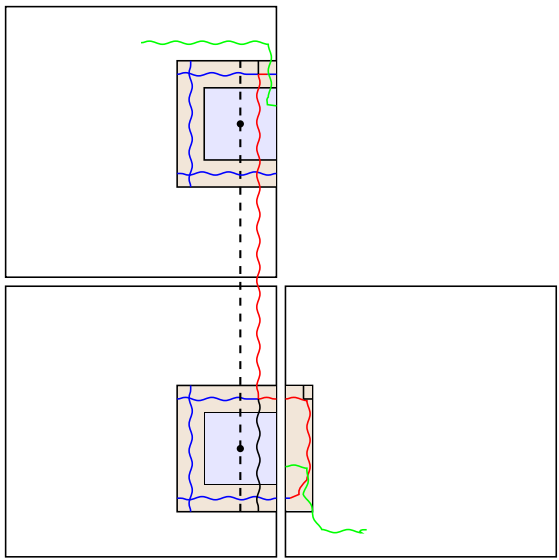
\begin{figure}[!htb]
 \centering
 \begin{tikzpicture}[semithick, scale = 1.2]
 \draw (-3.1, 0) rectangle (-0.1, 3);
 \draw (0, 0) rectangle (3, 3);
 \draw (-3.1, 3.1) rectangle (-0.1, 6.1);
 
 \draw (-0.1, 0.8) -- (-0.9, 0.8) -- (-0.9, 1.6) -- (-0.1, 1.6);
 
 \fill [blue!10] (-0.11, 0.81) rectangle (-0.89, 1.59); 
 \fill [blue!10] (-0.11, 4.4) rectangle (-0.9, 5.19);
 
 \fill [brown!20](-0.11, 1.61) rectangle (-1.2, 1.89);
 \fill [brown!20](-0.11, 0.5) rectangle (-1.2, 0.79);
 \fill [brown!20](-0.91, 0.79) rectangle (-1.2, 1.89);
 
 \fill [brown!20](-0.11, 5.2) rectangle (-1.2, 5.5);
 \fill [brown!20](-0.11, 4.1) rectangle (-1.2, 4.39);
 \fill [brown!20](-0.91, 4.39) rectangle (-1.2, 5.49);
 
 \draw (-0.1, 0.5) -- (-1.2, 0.5) -- (-1.2, 1.9) -- (-0.1, 1.9);
 \draw (0, 0.5) -- (0.3, 0.5) -- (0.3, 1.9) -- (0, 1.9);
 \fill [brown!20](0.29, 0.52) rectangle (0.01, 1.89);
 
 \draw [blue, style = {decorate, decoration = {snake , amplitude = 0.6}}] (-0.1, 0.65) -- (-1.2, 0.65);
 \draw [blue, style = {decorate, decoration = {snake , amplitude = 0.6}}] (-1.05, 0.5) -- (-1.05, 1.9);
 \draw [blue, style = {decorate, decoration = {snake , amplitude = 0.6}}] (-1.2, 1.75) -- (-0.3, 1.75);
 \draw [red, style = {decorate, decoration = {snake , amplitude = 0.6}}] (-0.3, 1.75) -- (-0.1, 1.75);
 \draw [red, style = {decorate, decoration = {snake , amplitude = 0.6}}] (0, 1.75) -- (0.15, 1.75) -- (0.15, 0.65) -- (0.05, 0.65);	
 \draw [blue] (0, 0.65) -- (0.05, 0.65);
 \draw [green, style = {decorate, decoration = {snake , amplitude = 0.6}}] (0, 1) -- (0.1, 1) -- (0.05, 0.65) -- (0.2, 0.3) -- (0.9, 0.3);
 \draw [style = {decorate, decoration = {snake , amplitude = 0.4}}] (0.2, 1.75) -- (0.3, 1.75);
 \draw [style = {decorate, decoration = {snake , amplitude = 0.4}}] (0.2, 1.9) -- (0.2, 1.75);
 
 \fill (-0.5, 1.2) circle [radius = 0.04];
 \draw [thick, dashed] (-0.5, 0.5) -- (-0.5, 5.5);
 \fill (-0.5, 4.8) circle [radius = 0.04];

 \draw (-0.1, 4.4) -- (-0.9, 4.4) -- (-0.9, 5.2) -- (-0.1, 5.2);
 \draw (-0.1, 4.1) -- (-1.2, 4.1) -- (-1.2, 5.5) -- (-0.1, 5.5);
  \draw [blue, style = {decorate, decoration = {snake , amplitude = 0.6}}] (-0.1, 4.25) -- (-1.2, 4.25);
  \draw [blue, style = {decorate, decoration = {snake , amplitude = 0.6}}] (-1.05, 4.1) -- (-1.05, 5.5);
  \draw [blue, style = {decorate, decoration = {snake , amplitude = 0.6}}] (-1.2, 5.35) -- (-0.3, 5.35);
  \draw [red, style = {decorate, decoration = {snake , amplitude = 0.6}}] (-0.3, 5.35) -- (-0.2, 5.35);
  \draw [blue, style = {decorate, decoration = {snake , amplitude = 0.6}}] (-0.2, 5.35) -- (-0.1, 5.35);
  \draw [green, style = {decorate, decoration = {snake , amplitude = 0.6}}] (-1.6, 5.7) -- (-0.2, 5.7) -- (-0.2, 5) -- (-0.1, 5);
  \draw [style = {decorate, decoration = {snake , amplitude = 0.6}}] (-0.3, 5.5) -- (-0.3, 5.35);

  \draw [red, style = {decorate, decoration = {snake , amplitude = 0.6}}] (-0.3, 5.35) -- (-0.3, 1.75);
  \draw [style = {decorate, decoration = {snake , amplitude = 0.6}}] (-0.3, 1.75) -- (-0.3, 0.5);

 \end{tikzpicture}
\caption{{\bf The construction of $v_r^{*, \ell + 1}$.} The two big boxes on the left define the rectangle $V_{L; r}$, the top and bottom one being $V_{L;U(r)}$ and $V_{L;D(r)}$ respectively. The two boxes filled with light blue are $A_{j_r, L}^u$ (top) and $A_{j_r, L}^d$ (bottom). The annuli $\tilde A_{j_r, L}^u \setminus A_{j_r, L}^u$ and $\tilde A_{j_r, L}^d \setminus A_{j_r, L}^d$ (or portions thereof) are indicated by light brown.
The path colored in red is $v_r^{*, \ell + 1}$. The green paths represent $h_r^{*, \ell + 1}$ and $h_{r+1}^{*, \ell + 1}$.}
\label{fig:vertical_embed}
\end{figure}

Let us now try to bound the expected weight of $v_r^{*, \ell + 1}$ where we assign vertex weight 
based on the variables \emph{associated with boxes of side length $ \leq L$}. Notice that this does 
not require the knowledge of the particular switching strategy we employ. Denote by $D_r$ 
the total weight of the light crossings in the two annuli where we assign vertex weight based 
on variables associated with the boxes of side length $\leq \tfrac{L_{100}}{2} \vee 1$. By iterating the induction hypotheses~\ref{induct:hypo4} and \ref{induct:hypo2} (see also the argument presented in the proof of Lemma~\ref{lem:uniform}) we get that the $L_{100}$ coarsenings of $\pi^{*, u, \ell}$'s are independent of all the Gaussians at level lower than $L_{100}$, which implies
\begin{equation}\label{eq-weight-D-i}
\E D_{r} \leq \frac{33 d_{\gamma, (\ell - \log_2(2L/L_{100}))^+}}{\Gamma}\,, 
\end{equation}
where $x^+ = \max(x, 0)$. Let $\tilde D_r$ denote the weight of the vertical crossing through $\tilde{A}_r$ or the weight of $v_r^{*, \ell + 1}$ when $L_{100} = 1$ (where again, we assign the vertex weight based on Gaussian variables associated with boxes of side length 
$\leq \tfrac{L_{100}}{2} \vee 1$). Clearly we have
\begin{equation}\label{eq-weight-tilde-D-i}
\E \tilde{D}_r \leq \frac{\big( 3L_{100} + \E |v_{j_r, L, L_{100}}^u - v_{j_r, L, L_{100}}^d| 
\big)}{(L_{100} / 2)\Gamma}d_{\gamma, (\ell - \log_2(2L/L_{100}))^+}\,.
\end{equation}
In both \eqref{eq-weight-D-i} and \eqref{eq-weight-tilde-D-i}, we have silently used the fact that 
expected minimum weights of crossings through $V_{L'}^m$'s form a superadditive sequence in $m$. The following lemma says that we do not lose much in terms of the expected costs of these 
crossings after we take into account the contributions from Gaussians appearing in higher levels.
\begin{lemma}\label{lem-max-exponential-Gaussian}
Let $X_1, \ldots, X_m$ i.i.d.\ standard Gaussian variables, where $m \leq (1/\gamma)^{100}$ and 
$U$ be a nonnegative random variable bounded by $C$. Then 
$$\E (U\max_i \mathrm{e}^{\gamma X_i}) \leq  \E U \big(1 + O(\gamma \log (1/\gamma))\big) + 
C \mathrm{e}^{-\Omega(\log(1/\gamma))}\,.$$
\end{lemma}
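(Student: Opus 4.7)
\medskip

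\noindent\textbf{Proof proposal.} Write $M=\max_i X_i$ so that $\max_i \e^{\gamma X_i}=\e^{\gamma M}$. The plan is to truncate $M$ at a threshold $A$ of order $\sqrt{\log(1/\gamma)}$ and estimate the contributions on the two events $\{M\le A\}$ and $\{M>A\}$ separately. The bound loses a factor $1+O(\gamma\log(1/\gamma))$ on the good event and only a polynomial-in-$\gamma$ error on the bad event.

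\medskip

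\noindent\emph{Step 1 (truncation).} Fix $A=c\sqrt{\log(1/\gamma)}$ with a sufficiently large absolute constant $c>0$ (to be selected so that the union-bound estimate in Step 3 dominates the pre-factor $m\le\gamma^{-100}$). Decompose
$$\E\bigl(U\max_i \e^{\gamma X_i}\bigr)=\E\bigl(U\e^{\gamma M}\mathbf{1}_{\{M\le A\}}\bigr)+\E\bigl(U\e^{\gamma M}\mathbf{1}_{\{M>A\}}\bigr).$$

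\medskip

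\noindent\emph{Step 2 (main term).} On $\{M\le A\}$ we simply have $\e^{\gamma M}\le \e^{\gamma A}=1+O(\gamma A)=1+O(\gamma\sqrt{\log(1/\gamma)})$, which in particular is $1+O(\gamma\log(1/\gamma))$. Using $U\ge0$ and removing the indicator,
$$\E\bigl(U\e^{\gamma M}\mathbf{1}_{\{M\le A\}}\bigr)\le \e^{\gamma A}\,\E U\le \E U\,\bigl(1+O(\gamma\log(1/\gamma))\bigr).$$
Crucially, this step does not require any independence between $U$ and the $X_i$'s; the indicator is dropped because the remaining integrand is nonnegative.

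\medskip

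\noindent\emph{Step 3 (tail term).} Here I will use $U\le C$ pointwise and Cauchy--Schwarz:
$$\E\bigl(U\e^{\gamma M}\mathbf{1}_{\{M>A\}}\bigr)\le C\,\bigl(\E\e^{2\gamma M}\bigr)^{1/2}\bigl(\P(M>A)\bigr)^{1/2}.$$
For the first factor, $\E\e^{2\gamma M}\le \sum_i \E\e^{2\gamma X_i}=m\e^{2\gamma^2}\le 2\gamma^{-100}$. For the second, a standard Gaussian tail estimate plus union bound gives $\P(M>A)\le m\e^{-A^2/2}\le \gamma^{-100}\e^{-c^2\log(1/\gamma)/2}$. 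Multiplying and taking $c$ large (e.g.\ $c^2\ge 800$) yields
$$\E\bigl(U\e^{\gamma M}\mathbf{1}_{\{M>A\}}\bigr)\le C\,\gamma^{-100}\e^{-(c^2/4)\log(1/\gamma)}\le C\,\e^{-\Omega(\log(1/\gamma))}.$$

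\medskip

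\noindent\emph{Step 4 (combine).} Adding the bounds from Steps 2 and 3 gives exactly the asserted estimate. There is no serious obstacle here; the only delicate point is that $U$ is allowed to be arbitrarily correlated with the $X_i$'s, which is why we must use the crude pointwise bound $U\le C$ on the bad event rather than any factorization. The choice $A\asymp\sqrt{\log(1/\gamma)}$ rather than $\log(1/\gamma)$ even gives the slightly stronger pre-factor $1+O(\gamma\sqrt{\log(1/\gamma)})$, but the weaker form stated in the lemma suffices for the subsequent applications.
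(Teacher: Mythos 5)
Your proof is correct and takes essentially the same approach as the paper: truncate $M=\max_i X_i$ at a threshold of order $\sqrt{\log(1/\gamma)}$, bound $\e^{\gamma M}\le \e^{\gamma A}=1+O(\gamma\log(1/\gamma))$ on the good event, and use the pointwise bound $U\le C$ on the tail. The paper's terse sketch instead pairs the estimate $\E M\le \sqrt{200\log(1/\gamma)}$ with a concentration bound $\P(M\ge\E M+u)\le \e^{-u}$ to control the tail, whereas you use Cauchy--Schwarz together with the crude union-bound estimates $\E\e^{2\gamma M}\le m\e^{2\gamma^2}$ and $\P(M>A)\le m\e^{-A^2/2}$; both routes are standard and equally valid here.
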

\begin{proof}
Let $M = \max_i X_i$. It is clear that $\E M \leq \sqrt{200 \log (1/\gamma)}$, and also that 
$\P(M\geq \E M+ u)\leq e^{-u}$. The conclusion follows by a straightforward computation.
\end{proof}
\noindent Combining \eqref{eq-weight-D-i}, \eqref{eq-weight-tilde-D-i}, Lemmas~\ref{lem:uniform} 
and \ref{lem-max-exponential-Gaussian}, we get that the expected weight for each ``switching gadget''is bounded by $(1 + 10\times2^{-100})(2 - 0.2\gamma^2)^{\ell} \leq \frac{1 + 20\times 2^{-100}}{\Gamma}d_{\gamma, \ell}$, where we use the fact that $\gamma \ll 1$. Recall the expression for $\E\tilde{D}_{\gamma, \ell; k}$ as 
given in \eqref{eq:weight_upper_bnd} and the fact that 
$$Z_{\ell; i, k, j} = (-1)^{(k - 1)\Gamma + j}b_\ell a_{V_{2L; \lceil(k-1)\Gamma + 1 / 2 \rceil}} + (-1)^i c_\ell a_{R_{\ell; k, j}}\,.$$
It is then natural to consider switching strategies $\{i_{j, k}\}_{j \in [\Gamma], k \in [2]}$ that give a small value of the following quantity:
\begin{equation}
\label{eq:optimize}
c_\ell\gamma\sum_{j = 1}^\Gamma d_{\gamma, \ell, j}(-1)^{i_{j, k}} a_{R_{\ell; k, j}} + R\frac{1 + 20\times 2^{-100}}{\Gamma}d_{\gamma, \ell}\,.
\end{equation}
To this end we consider a similar optimization problem for a process that is related to this 
quantity. Denote by $J = \{j\in [\Gamma]: d_{\gamma, \ell, j} \leq \Gamma^{-2/3} d_{\gamma, 
\ell}\}$. By Cauchy-Schwartz inequality, we see that 
\begin{equation}\label{eq-var-lower-J}
\sum_{j\in J} d_{\gamma, \ell, j}^2 \geq \frac{(1 - \Gamma^{-1/5}) d_{\gamma, \ell}^2}{\Gamma}\,.
\end{equation}
Write $J = \{j_1, \ldots, j_{|J|}\}$ with $1 \leq j_1 < j_2 < \cdots < j_{|J|} \leq \Gamma$, and 
define the sequence
$$g_{\gamma, i} =c_\ell^2\gamma^2\mbox{$\sum_{k \leq i}$}d_{\gamma, \ell, j_k}^2 \mbox{ for } 1 
\leq i \leq 2|J|\,,$$
where $j_i = \Gamma + j_{i - |J|}$ and $d_{\gamma, \ell, j_i} = d_{\gamma, \ell, j_{i - |J|} }$ for $i > |J|$.
Consider the process
$$S_{t; \ell} = c_\ell \gamma \mbox{$\sum_{0 \leq k \leq i}$}  d_{\gamma, \ell, j_{k+1}} (W_{t_i 
\wedge j_{k+1}}^{\ell} - W_{t_i \wedge (j_{k+1} - 1)}^{\ell}) \mbox{ for } g_{\gamma, i} \leq t 
\leq g_{\gamma, i+1}\,,$$
where $t_i = j_{i+1} - 1 + \tfrac{t - g_{\gamma, i}}{c_\ell^2 \gamma^2 d_{\gamma, \ell, j_{i+1}}^2}$ and 
$g_{\gamma, 0} = 0$. Thus $\{S_{t; \ell}\}_{0 \leq t \leq g_{\gamma, 2|J|}}$ is a standard Brownian motion whose increment between the time points $g_{\gamma, i-1}$ and $g_{\gamma, i}$ is given by $c_\ell \gamma d_{\gamma, \ell, j_i} a_{R_{\ell; k, j_i}}$ 
for $1 \leq i \leq 2|J|$. Now suppose that $[T_0, T_1, \cdots, T_{R'+1}]$ is the partition of $[0, g_{\gamma, 2|J|}]$ given by Theorem~\ref{thm:brownian_switch} when applied to $\{S_{t; \ell}\}_{0 \leq t \leq g_{\gamma, 2|J|}}$ for $\lambda = \tfrac{(1+20\times 2^{-100})d_{\gamma, \ell}}{\Gamma}$ (with appropriate change of scales). 
Using this we will define $L$-coarsening of the path that we will construct. Since this has to be 
simple by our induction hypothesis, it suffices to assign an element in $\{-1, 1\}^{2|J|}$ to 
$j_1, j_2, \cdots, j_{2|J|}$. For this purpose define the random variables $\mathrm{sgn}_{1; 
\ell}, \cdots, \mathrm{sgn}_{2|J|; \ell}$ as follows:
\begin{equation*}
    \mathrm{sgn}_{i; \ell} = \begin{cases}
               -1 & \text{if }T_r < g_{\gamma, i} \leq T_{r + 1} \mbox{ such that } S_{T_r; 
               \ell} <  
               S_{T_{r+1}; \ell}\\
                1 & \text{if }T_r < g_{\gamma, i} \leq T_{r + 1} \mbox{ such that } S_{T_r; 
                \ell} >  
               S_{T_{r+1}; \ell}\,.
           \end{cases}
\end{equation*} 
It is clear that $(\mathrm{sgn}_{1; \ell}, \cdots, \mathrm{sgn}_{2|J|; \ell})$ is identically 
distributed as $(-\mathrm{sgn}_{1; \ell}, \cdots, -\mathrm{sgn}_{2|J|; \ell})$. As a consequence 
the distribution of $\pi^{*, \ell + 1}_{L, coarse}$ is invariant relative to reflection about the 
line $y = L - 0.5$. \par

Now we collect the contributions to the weight of $\pi^{*, \ell+1}$ from all the sources. We use 
crossings through $V_{\textrm{up}}, V_{\textrm{down}}, V_{\textrm{left}}$ and $V_{\textrm{right}}$, as in Case 1, to link the crossings $\pi^{*, \ell}_{i, 1}$ and 
$\pi^{*, \ell}_{i, 1}$ for $i \in [2]$. As discussed in Case 1, this will not tamper with induction 
hypothesis~\ref{induct:hypo4}. We also need a bound on the error we made by interpolating $(S_{t; \ell})$ as a 
Brownian motion. Notice that for each $(T_r, T_{r + 1})$ that is contained in some interval $(g_{\gamma, i-1}, g_{\gamma, i})$, we lose at most $M_{\mathrm{dis}}$ due to ``discretization'' of the optimal partition obtained from Theorem~\ref{thm:brownian_switch}, where 
$$M_{\mathrm{dis}} = \max_{1 \leq i \leq 2|J|}\max_{g_{\gamma, i-1} \leq s, t \leq g_{\gamma, i}} |S_{t; \ell} - S_{s; \ell}|\,.$$
Thus the total loss we incur due to discretization is at most $M_{\mathrm{dis}}R'$. We use a 
simple estimate for $M_{\mathrm{dis}}$ as follows:
\begin{equation}\label{eq-discretization}
\E M_{\mathrm{dis}}  \leq O(\gamma \Gamma^{-2/3} \log(\Gamma)) d_{\gamma, \ell}, \mbox{ and } \P(|M_{\mathrm{dis}} - \E M_{\mathrm{dis}}| \geq u\gamma \Gamma^{-2/3} \log(\Gamma)d_{\gamma, \ell} ) \leq e^{-\Omega(u^2)}\,,
\end{equation}
where we use the fact that $d_{\gamma, \ell, j} \leq \Gamma^{-2/3}d_{\gamma, \ell}$ on $J$.
Finally we need to make amends for the fact that what we pass as ``penalty'' term to \eqref{eq:optimize} is 
actually based on the Gaussians up to level $\ell$. But the multiplier at level $\ell + 1$ can be at most $\mathrm{e}^{M_{\ell + 1}}$, where $M_{\ell+1}$ 
is the maximum of all Gaussians added at level $\ell + 1$. Altogether, we deduce that
\begin{align}\label{eq-key-induction}
d_{\gamma, \ell+1} &\leq  2 d_{\gamma, \ell} (1 + \gamma^2/2 + O(\gamma^3) + 
\frac{10}{\Gamma}) - \mathcal I_\ell + \sqrt{\E R'^2}\sqrt{\E M_{\mathrm{dis}}^2} \nonumber \\ 
& + \frac{(1+20\times 2^{-100})d_{\gamma, \ell}}{\Gamma} \sqrt{\E R'^2}\sqrt{\E(\e^{\gamma M_{\ell+1}}- 1)^2}\,,
\end{align}
where 
\begin{equation*}
\mathcal I_{\ell} = \E \bigg( \sum_{i=1}^{R'+1} |S_{T_i; \ell} - S_{T_{i-1}; \ell}| - R' \frac{(1+20\times 2^{-100})d_{\gamma, \ell}}{\Gamma}\bigg)\,,
\end{equation*}
It follows from condition~(d) in Theorem~\ref{thm:brownian_switch} and a straightforward computation that
\begin{equation}
\label{eq:switch_bound}
\E R'^2 \leq O\bigg(\Big(\frac{\Gamma^2\gamma^2\sum_{j \in J}d_{\gamma, \ell, j}^2}{d_{\gamma, \ell}^2}\Big)^2\bigg)\,.
\end{equation}
In addition, by Theorem~\ref{thm:brownian_switch}, we deduce that for all $\alpha > 0$ 
$$\mathcal I_\ell \geq \big(\frac{4}{3} - 2^{-100}\big) \gamma^2 \Gamma \frac{\sum_{j\in J} d_{\gamma, \ell, 
j}^2}{d_{\gamma, \ell}}\,.$$
Here we used the fact that $\ell \geq 60$. Plugging the bounds from \eqref{eq-discretization}, \eqref{eq:switch_bound} and Lemma~\ref{lem-max-exponential-Gaussian} into \eqref{eq-key-induction} we obtain
$$d_{\gamma, \ell+1} \leq (2 - 0.6\gamma^2)2\Gamma(2 - 0.2\gamma^2)^\ell \leq 2\Gamma(2 - 
0.2\gamma^2)^{\ell + 1}\,.$$
This completes the proof of Theorem~\ref{thm-main}.

\end{document}